\newtheoremstyle{myremark} % name
    {7pt}                    % Space above
    {7pt}                    % Space below
    {}  	                 % Body font
    {}                           % Indent amount
    {\bf}       	         % Theorem head font
    {.}                          % Punctuation after theorem head
    {.5em}                       % Space after theorem head
    {}  % Theorem head spec (can be left empty, meaning ‘normal’)
\theoremstyle{plain}
\newtheorem{lemma}{Lemma}[section]
\newtheorem{corollary}[lemma]{Corollary}
\newtheorem{theorem}[lemma]{Theorem}
\newtheorem*{theorem-main}{Theorem~\ref{thm:main}}
\newtheorem*{theorem-secondary}{Theorem~\ref{thm:secondary}}
\theoremstyle{definition}
\newtheorem{definition}[lemma]{Definition}
\theoremstyle{myremark}
\newtheorem{remark}[lemma]{Remark}
\newcommand{\Z}{\ensuremath{\mathbb{Z}}}
\newcommand{\p}{{\mathfrak{p}}}
\newcommand{\m}{{\mathfrak{m}}}
\newcommand{\q}{{\mathfrak{q}}}
\newcommand{\sdim}{{\phantom{}^*\mathrm{dim}}}
\newcommand{\xra}{\xrightarrow}
\newcommand{\grmod}{\mathfrak{grmod}}
\newcommand{\vdim}{\mathrm{vdim}}
\begin{document}

\title{A Degree Formula for Equivariant Cohomology Rings}
\author{Mark Blumstein and Jeanne Duflot}
%\date{\today}

\maketitle
\begin{abstract}
This paper generalizes a result of Lynn on the ``degree" of an equivariant cohomology ring $H^*_G(X)$.  The degree of a graded module is a certain coefficient of its Poincar\'{e} series, and is closely related to multiplicity.  In the present paper, we study these commutative algebraic invariants for equivariant cohomology rings. The main theorem is an additivity formula for degree: $$\deg(H^*_G(X)) = \sum_{[A,c] \in \mathcal{Q'}_{max}(G,X)}\frac{1}{|W_G(A,c)|} \deg(H^*_{C_G(A,c)}(c)).$$ We also show how this formula relates to the additivity formula from commutative algebra, demonstrating both the algebraic and geometric character of the degree invariant. 
\end{abstract}
\setcounter{tocdepth}{1}
\tableofcontents

\section{Introduction}
In the 1950's and 60's equivariant cohomology developed in many arenas: Cartan's theory of equivaraint differential forms, representation theory, and Atiyah's $K$-theory, to name a few. However, the commutative algebraic properties of the Borel equivariant cohomology ring $H^*_G(X)$ associated to the action of a group $G$ on a space $X$, weren't well understood until 1971 when Quillen published \textit{The Spectrum of an Equivariant Cohomology Ring: I/II} (\cite{Quillen1},\cite{Quillen2}). Quillen explored the Krull dimension, prime spectrum, and localization of the equivariant cohomology ring $H^*_G(X)$, where $G$ is a compact Lie group, $X$ is a $G$-space satisfying certain hypotheses, and cohomology coefficients are taken in a field $k$ of characteristic $p$. He showed that tucked away in each of these commutative algebraic constructs is a great deal of information about the topology and geometry of the $G$-action on $X$. Building on Quillen's insights, many authors have researched commutative algebraic invariants of equivariant cohomology rings.  For example, more recent research  includes  \cite{Duflot}, \cite{Duflot3}, where the concepts of localization and primary decomposition are explored.  Symonds \cite{Symonds} studied Castelnuovo-Mumford regularity and the article of Lynn \cite{Lynn}  is a direct precursor for this paper.  For torus or elementary abelian group actions, the Cohen-Macaulay property from commutative algebra was studied in \cite{GoTo}, \cite{AlFrPu}, \cite{AFP}.  This is by no means an exhaustive list of references, but shows how the study of commutative-algebraic concepts and invariants in algebraic topology, begun by Quillen, continues to the present day.

The present paper  answers a problem posed by Lynn \cite{Lynn} about a certain algebraic invariant called the \textit{degree}.  The degree of a  finitely generated graded module $M$ over a  positively graded ring  $S$, finitely generated as an algebra over  $S_0$, with $S_0$ an Artinian ring,  is defined in terms of the Poincar\'{e} series of $M$ ($PS(M)$):
$$\deg(M) \doteq \lim_{t \rightarrow 1} (1-t)^{\dim_S(M)}PS(M);$$  here ``$\dim_S(M)$" is the Krull dimension of the $S$-module $M$.   The study of degree is ubiquitous in commutative algebra and algebraic geometry; indeed, for a ``standard" graded algebra $R$-in other words, if  $R$ is a finitely generated graded algebra over a field, generated by elements of graded degree 1--the degree of a finitely generated graded $R$-module $M$ is equal to its Samuel multiplicity.    Now, equivariant cohomology rings are usually not generated by elements in a single graded degree, and the degree of $M$ is not always an integer,  but there is still a relationship between the degree and an analog of the Samuel multiplicity for graded modules over graded rings:  for an exposition of the precise relationship between degree and this analog of the Samuel multiplicity in  graded rings, see the preprint of the first author \cite{Blumstein}.

As far as the authors know, the first study of degree in algebraic topology was done by Maiorana \cite{ma}, \cite{ma2} (Maiorana called the invariant ``$c(M)$", rather than ``degree"; here, we follow  the nomenclature of \cite{Bensonea}).   In particular, the paper \cite{ma2} contains some interesting calculations of degree in equivariant cohomology, presenting applications of the degree in algebraic topology which, again as far as the authors know, haven't yet been fully explored, and will be the subject of future research.

Equivariant cohomology rings are complicated graded rings, in general.  In the case where $X = \{pt \}$ is a one point space, and $G$ is a finite group, the equivariant cohomology is the cohomology ring of the group $G$, which may be defined in a purely algebraic way, and many texts and journal papers have been written on computational examples.  Quillen's theorems in \cite{Quillen1},\cite{Quillen2} tells us that the complexity of this cohomology ring (recall coefficients are taken in a field $k$ of characteristic $p$)  is directly linked to the poset of elementary abelian $p$-subgroups  of $G$, and much has been made of this since then.  On the other hand, if the group $G$ is very nice; for example, if $G$ is a torus or an elementary abelian $p$-group, but $X$ is not a point, Quillen's theorems tells us that the complexity of the equivariant cohomology ring is linked to the cohomology  structure of fixed-point sets $X^B$ of the  subgroups $B$ of $G$. Again, much has been made of this by many different authors.

The degree formula proven in this paper (without explaining now the meaning of the various terms, which come directly from Quillen's work)

\begin{equation} \deg(H^*_G(X)) = \sum_{[A,c] \in \mathcal{Q'}_{max}(G,X)}\frac{1}{|W_G(A,c)|} \deg(H^*_{C_G(A,c)}(c)) \end{equation}
really falls in line with Quillen's type of results, presenting a way to ``compute" the degree from subgroups of the group $G$ associated to elementary $p$-abelian subgroups of $G$ and fixed point sets of these elementary abelian subgroups, reducing the calculation of degree for an arbitrary group $G$ to calculations to hopefully simpler types of groups acting on spaces in a simpler way:  central extensions of an elementary abelian group $A$, with $A$ acting trivially on a connected space.  This paper does not actually present the study of any sample computations for these sorts of actions;  we hope to return to this study in future work.  However, see the paper of Lynn \cite{Lynn} for an example when $X$ is a point.

%This additivity formula resembles many of the localization-type results common in equivariant cohomology. 

Returning to our introduction of the ideas presented in this paper, given any compact Lie group $G$, one may smoothly embed it into a unitary group $U$. For a given prime $p$, define the space $S$ to be the space of all diagonal matrices of order $p$ in $U$. The resulting quotient $F \doteq U/S$ is a smooth $G$-manifold (known in some circles as Quillen's magic space!). Lynn derives her additivity formula, which is the additivity formula (1) above, in the case $X = \{pt\}$,  by cleverly using geometric properties of the orbits of $G$ on certain sub-manifolds of $F$. These orbits carve up the space in just the right way (Theorems 5.1 and 7.1 of \cite{Lynn}) so that on the algebraic side, the degree invariant adds over the components. For this reason, we refer to Lynn's additivity formula as ``geometric".

On the other hand, if one appeals only to commutative algebra, there is a different additivity formula for the degree of a graded $S$-module $M$, which we will call the ``algebraic" additivity formula. Specifically, 
\begin{equation} \deg(M) = \sum_{\p \in \mathcal{D}(M)}\ell_{S_{[\p]}}(M_{[\p]})\cdot \deg(S/\p), \end{equation} 
where $\mathcal{D}(M)$ is the set of minimal primes $\p$ such that $\dim_S(S/\p)=\dim_S(M)$. Here, $\dim_A(X)$ is the Krull dimension of the $A$-module $X$, $\ell_A(X)$ is the length, in the algebraic sense, of the $A$-module $X$, and $Y_{[\p]}$  is the graded localization of $Y$ at a graded prime ideal $\p$.

Lynn concludes her paper asking how to relate this ``algebraic" formula (2) to her version of the ``geometric" formula (1).  
The main theorem (\ref{theorem main thm geometric degree})--additivity formula (1)--of the present paper makes this connection precise, while also significantly generalizing Lynn's result. This theorem is proven for a compact Lie group $G$ acting on a topological space $X$ which is either compact or has finite mod $p$ cohomological dimension.

While Quillen's theorems say that the sums on the right side of (1) and (2) have essentially the same index sets, we show that the geometric additivity formula (1) is term-by-term equal with the algebraic additivity formula (2) stated above. In fact, the proof of our additivity formula (1) uses the algebraic additivity formula (2) twice.  Thus, the connection between commutative algebra and geometry is built into our methodology. 

By contrast, the key tool in Lynn's proof of the special case of (1), where $X$ is a point,  is the Gysin sequence of differential topology, which depends fundamentally on the existence of a Riemannian metric on a smooth manifold. Thus, Lynn's results leading to her proof of the special case of (1) are stated only in the category of smooth $G$-manifolds.  Because our methods of proof rely mainly on commutative algebra, we were able to get away from some of the more complicated geometric/topological arguments Lynn resorts to. Consequently, the hypotheses of our results are far less restrictive (no smoothness assumptions) and agree with the hypotheses of Quillen's theorems.

\section{Summary of Quillen's results}
Most of the results of this section can be found in Quillen \cite{Quillen1}.  For any compact Lie group $G$, there exists a ``universal principal bundle" for $G$, denoted $EG \rightarrow BG$. (See Milnor for a construction \cite{Milnor}.)  Recall that $G$ acts freely on $EG$ and $EG/G \cong BG$. 

If $X$ is a topological space on which $G$ acts continuously (a ``$G$-space"), $G$ acts diagonally on $EG \times X$, and one can consider the associated fiber bundle: $EG \times _G X \doteq (EG \times X) / G$. Then, the \textit{equivariant cohomology ring} is defined by $$H^*_G(X) \doteq H^*(EG \times_G X).$$  The coefficient ring isn't noted always, but generally we'll consider coefficients in a field whose characteristic divides the order of $G$, if $G$ is finite.

For our purposes, $H^*$ represents singular cohomology, and the product is cup product. In Quillen's formulation \cite{Quillen1} he uses sheaf cohomology, but we don't  lose any of the fundamental properties by switching to singular cohomology (this is explained on pg. 1163 in \cite{Symonds}.) 
 
If $G$ and $G'$ are compact Lie groups with $X$ a $G$-space and $X'$ a $G'$-space, consider a pair of maps $(u,f): (G,X) \rightarrow (G',X')$ such that $u$ is a Lie group homomorphism, and $f$ a continuous map that is $u$-equivariant, i.e. $f(gx)= u(g)f(x)$ for all $g \in G$ and all $x \in X$. These are the morphisms in the category of pairs $(X,G)$, with $X$ a $G$-space, that we are considering.  With respect to such morphisms, equivariant cohomology is contravariantly functorial \cite{Quillen1}.  In particular, if $Y \subseteq X$, $H$ is a subgroup of $G$ and $u$ and $f$ are inclusion mappings, then the induced map $H^*_G(X) \rightarrow H^*_H(Y)$  is called the restriction map and usually denoted $res_H^G$.  Also, if $h \in G$, consider the automorphism $(c_h, \mu_h): (G,X) \rightarrow (G,X)$, where $c_h$ is defined for each $g \in G$ by $c_h(g) \doteq hgh^{-1}$ and $\mu_h$ is defined for each $x \in X$ by $\mu_h(x) = hx$.  We will call the map on equivariant cohomology induces by such a map an ``inner automorphism".  In \cite{Quillen1}, one finds the proof of
\begin{lemma} \label{lemma inner autos act trivially}
Inner automorphisms act trivially on equivariant cohomology: If $X$ is a $G$-space, then for every $h \in G$, $(c_h,\mu_h)^*:H^*_G(X) \rightarrow H^*_G(X)$ is the identity map.  
\end{lemma}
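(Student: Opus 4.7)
The plan is to show that the map on Borel constructions induced by $(c_h,\mu_h)$ is already equal to the identity at the space level, not just up to homotopy. Then functoriality of cohomology gives the result.

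The first step is to choose a model for the induced map on $EG\times_G X$. Since $c_h : G \to G$ is a group automorphism and $\mu_h : X \to X$ is $c_h$-equivariant, I need a $c_h$-equivariant map $\phi_h : EG \to EG$ so that $\phi_h \times \mu_h : EG \times X \to EG \times X$ descends to $EG\times_G X$. With the convention that $G$ acts on the right on $EG$ and on the left on $X$, so that $(eg,x)\sim(e,gx)$ in the Borel construction, the natural choice is $\phi_h(e) \doteq e h^{-1}$. A one-line check verifies equivariance:
\begin{equation*}
\phi_h(eg) \;=\; egh^{-1} \;=\; (eh^{-1})(hgh^{-1}) \;=\; \phi_h(e)\cdot c_h(g).
\end{equation*}
Therefore $\phi_h\times\mu_h$ respects the diagonal $G$-action and descends to a well-defined map $\widetilde{\phi_h} : EG\times_G X \to EG\times_G X$ given on equivalence classes by $\widetilde{\phi_h}[e,x] = [eh^{-1},hx]$; this is by definition the map on the Borel construction induced by the pair $(c_h,\mu_h)$.

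The key observation is that $\widetilde{\phi_h}$ is literally the identity. Indeed, applying the defining equivalence relation with $g=h$ gives
\begin{equation*}
[eh^{-1},\, hx] \;=\; [(eh^{-1})\cdot h,\, x] \;=\; [e,x],
\end{equation*}
so $\widetilde{\phi_h}=\id_{EG\times_G X}$. Taking singular cohomology, $(c_h,\mu_h)^* = \widetilde{\phi_h}^* = \id$ on $H^*_G(X)$.

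The main obstacle is really just bookkeeping: the statement is a formal consequence of the construction of $H^*_G$, once one picks a lift of $c_h$ to $EG$ that is genuinely compatible with the chosen left/right conventions on the $G$-action. The temptation is to argue homotopically — e.g., using a path from $e$ to $h$ in $G$ — but that approach requires $G$ to be connected. The trick above avoids that issue entirely, which is why the lemma holds for arbitrary compact Lie groups. No deeper topology is needed; the equality $[eh^{-1},hx]=[e,x]$ in the quotient is doing all the work.
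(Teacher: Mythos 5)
Your proof is correct. The paper does not actually supply a proof of this lemma; it simply cites Quillen's paper for it. Your argument --- lifting $c_h$ to the $c_h$-equivariant translation $\phi_h(e)=eh^{-1}$ on $EG$, verifying that $\phi_h\times\mu_h$ descends, and then observing that the descended map on $EG\times_G X$ is literally the identity via the relation $[eh^{-1},hx]=[e,x]$ --- is the standard one, and is essentially Quillen's. You are also right to flag that this avoids the path-in-$G$ homotopy argument, which would need $G$ connected; your computation makes clear the lemma is purely formal and needs no such hypothesis.
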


\subsection{Commutative Algebra of Equivariant Cohomology Rings}
Quillen laid the foundation for the study of the commutative algebra of equivariant cohomology rings.  The  theorems of Quillen's papers (\cite{Quillen1}, \cite{Quillen2}) that we need here are presented below.  The two main theorems relate the prime spectrum of $H^*_G(X)$ to the elementary abelian groups which appear as subgroups of $G$, and have fixed points when acting on $X$.

Standard properties of cup product say that, in general, the graded ring $H^*_G(X) \doteq \oplus_{i \geq 0} H^i_G(X)$ will not be a commutative ring--although when the characteristic of the coefficient ring is 2, we do get commutativity of the product.    In order to directly apply results from commutative algebra, we make the following definition for any commutative coefficient ring $k$:     If $char(k)$ is odd or $0$, $H_G(X,k) \doteq \oplus_{i \geq 0} H^{2i}_G(X,k) $ is the even degree part of $H^*_G(X)$. If $char(k) =2$ then $H_G(X,k) \doteq H^*_G(X,k)$.   Of course, $H_G(X)$ is always a commutative graded ring.

The following results allow for the study of geometric and commutative algebraic properties of equivariant cohomology rings; coefficients will always be taken in a field $k$.

\begin{theorem}(\cite{Quillen1} Corollary 2.2) \label{theorem Quillen finiteness 1}
Let $G$ be a compact Lie group, $X$ be a $G$-space and suppose that $H^*(X)$ is a finitely generated $k$-vector space.   Then, $H^*_G(X)$ is a finitely generated graded $k$-algebra.
\end{theorem}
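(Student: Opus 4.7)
The natural tool is the Borel fibration
\[
X \longrightarrow EG \times_G X \longrightarrow BG
\]
together with its Serre spectral sequence, which takes the multiplicative form
\[
E_2^{p,q} = H^p\!\bigl(BG;\, H^q(X;k)\bigr) \;\Longrightarrow\; H^{p+q}_G(X;k),
\]
the local coefficient system being determined by the action of $\pi_0(G)$ on $H^q(X;k)$. The strategy is to establish module-finiteness of $H^*_G(X;k)$ over $H^*(BG;k)$; since $H^*(BG;k)$ will itself be a finitely generated $k$-algebra, $H^*_G(X;k)$ will then be a finitely generated $k$-algebra as well.

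First, I would record (or cite) the fact that $H^*(BG;k)$ is a finitely generated, hence Noetherian, graded $k$-algebra for any compact Lie group $G$. For connected $G$ this is classical (Borel); for a general compact Lie group one uses the short exact sequence $1\to G^0\to G\to \pi_0(G)\to 1$, the Serre spectral sequence of $BG^0\to BG\to B\pi_0(G)$, and the Venkov/Evens theorem that $H^*(BH;k)$ is finitely generated for $H$ finite. This is the main technical input and is the step I expect to be the real obstacle if one had to prove it from scratch, but in the context of this paper it is a standard invoked result.

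Next, by hypothesis $H^q(X;k)$ is a finite-dimensional $k$-vector space for every $q$ and vanishes for $q$ large. Hence each row $E_2^{*,q} = H^*\!\bigl(BG; H^q(X;k)\bigr)$ is a finitely generated $H^*(BG;k)$-module (the local coefficient system is a finite-dimensional $k[\pi_0(G)]$-module, and cohomology of $BG$ with such coefficients is finitely generated over $H^*(BG;k)$ by the same Evens/Venkov argument applied to the covering $BG^0\to BG$). Summing over the finitely many nonzero $q$, the whole $E_2$ page is a finitely generated graded $H^*(BG;k)$-module, and so is Noetherian.

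Finally, I would run the standard Noetherian spectral-sequence argument: each differential $d_r$ is $H^*(BG;k)$-linear by multiplicativity of the Serre spectral sequence, so every subsequent page is a subquotient of a Noetherian $H^*(BG;k)$-module and therefore Noetherian. The spectral sequence degenerates at a finite stage on each total degree (because $H^q(X;k)=0$ for $q\gg 0$), so $E_\infty$ is a finitely generated $H^*(BG;k)$-module, and it is the associated graded of a filtration on $H^*_G(X;k)$ by $H^*(BG;k)$-submodules. By the standard lifting of generators through such a filtration, $H^*_G(X;k)$ is itself finitely generated over $H^*(BG;k)$, and combined with finite generation of $H^*(BG;k)$ as a $k$-algebra we conclude that $H^*_G(X;k)$ is a finitely generated graded $k$-algebra, as required.
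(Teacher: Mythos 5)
Since the paper merely cites Quillen's Corollary~2.2 here and offers no proof of its own, the relevant comparison is with Quillen's argument. Your proof is correct and turns on the same idea --- show $E_2$ is a finitely generated module over a Noetherian graded ring, observe the differentials are linear over that ring, and push module-finiteness through the pages and the filtration --- but it is built on a different fibration. You work with the Serre spectral sequence of $X \to EG\times_G X \to BG$, which puts $H^*(BG;k)$ at the base and forces you to invoke Venkov/Evens finiteness of $H^*(BG;k)$ (and Borel's theorem for the identity component) as external inputs. Quillen instead fixes a faithful unitary representation $G\hookrightarrow U$ and uses the Leray spectral sequence of $EU\times_G X\to BU$: the fiber $U\times_G X$ fibers over the compact manifold $U/G$ with fiber $X$, so it still has finite-dimensional $k$-cohomology, while the base ring $H^*(BU;k)$ is a polynomial ring on Chern classes --- elementary, and no Venkov is needed (indeed Venkov's theorem is recovered as the case $X=\mathrm{pt}$). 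So Quillen's route is self-contained where yours relies on two quoted finiteness theorems; otherwise the Noetherian-descent and generator-lifting steps are identical. One point you should make explicit: for the rows $E_2^{*,q}=H^*\bigl(BG;H^q(X;k)\bigr)$ with nontrivial local system, finiteness over $H^*(BG;k)$ uses the covering $BG^0\to BG\to B\pi_0(G)$ and the $\pi_0(G)$-equivariant form of Evens' theorem, exactly as you indicate, and the degeneration in each total degree uses that $H^q(X;k)=0$ for $q$ large, which you do state.
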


It's not hard to see that this implies 
\begin{corollary} With the hypotheses of Theorem 2.2, $H_G(X)$ is a finitely generated commutative graded $k$-algebra.  In particular, $H^0_G(X)$ is a finite dimensional algebra over $k$ and is thus an Artinian ring.
\end{corollary}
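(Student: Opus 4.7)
My plan is to reduce the corollary to two facts: first, that one can extract a commutative finitely-generated subalgebra from the graded $k$-algebra $H^*_G(X)$ supplied by Theorem~\ref{theorem Quillen finiteness 1}; and second, that $H^0_G(X)$ is finite-dimensional over $k$, using a short topological argument.

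For the first fact, I would distinguish cases by the characteristic. When $\mathrm{char}(k)=2$ there is nothing to prove, since $H_G(X) = H^*_G(X)$ and graded commutativity of cup product becomes strict commutativity. When $\mathrm{char}(k)$ is odd or $0$, I would fix a finite set of homogeneous generators of $H^*_G(X)$, partition them into even-degree generators $e_1,\dots,e_r$ and odd-degree generators $o_1,\dots,o_s$, and note that graded commutativity gives $2 o_i^2 = 0$, hence $o_i^2 = 0$, together with $o_i o_j = -o_j o_i$. The crucial combinatorial observation is that any monomial in the generators that lies in the even-degree part $H_G(X)$ must involve an even number of odd-degree factors, each appearing with exponent at most one, and such a monomial can therefore be rewritten up to sign as a product of the $e_j$'s and of the pairs $o_i o_j$ with $i<j$. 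This exhibits the finite set $\{e_j\}\cup\{o_i o_j : i<j\}$ as a generating set for $H_G(X)$, and commutativity follows at once, since the product of two even-degree classes has sign $+1$ by graded commutativity.

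For the second fact, I would use that for singular cohomology, $H^0(Y;k)$ has $k$-dimension equal to $|\pi_0(Y)|$ whenever the latter is finite, so it suffices to bound $|\pi_0(EG\times_G X)|$. Since $EG$ is contractible, hence path-connected, the map $X\to EG\times_G X$ given by $x\mapsto[e_0,x]$ for any fixed $e_0\in EG$ is surjective on path components: any $[e,x]$ is connected to $[e_0,x]$ by the path $t\mapsto[\gamma(t),x]$ where $\gamma$ runs from $e$ to $e_0$ in $EG$. Combined with the hypothesis $\dim_k H^*(X;k)<\infty$, this yields $|\pi_0(EG\times_G X)|\le|\pi_0(X)|=\dim_k H^0(X;k)<\infty$, so $H^0_G(X)$ is finite-dimensional over $k$. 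The Artinian conclusion is then immediate, since any descending chain of ideals in a finite-dimensional $k$-algebra strictly decreases in dimension and must stabilize.

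The main subtlety I anticipate is the finite-generation step in characteristic $\neq 2$: one cannot just discard odd-degree generators, and without the nilpotence $o_i^2=0$ one would worry that the even-degree subring would require longer and longer products of odd classes to generate. It is precisely this nilpotence, together with graded commutativity, that keeps the generating set finite. Everything else reduces to standard bookkeeping and an elementary observation about path components of Borel constructions.
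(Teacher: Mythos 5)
Your proof is correct. The paper itself offers no argument for this corollary (it is asserted as an easy consequence of Theorem~\ref{theorem Quillen finiteness 1}), so there is nothing to compare it against directly; you have supplied a complete, self-contained justification. The first half — passing from a finite set of homogeneous generators of $H^*_G(X)$ to the finite generating set $\{e_j\}\cup\{o_io_j:i<j\}$ for the even subring, using $2o_i^2=0$ and graded-commutativity — is the standard argument, and you correctly single out the nilpotence $o_i^2=0$ as what keeps the generating set finite. Your topological argument for $\dim_k H^0_G(X)<\infty$ via the surjection $\pi_0(X)\twoheadrightarrow\pi_0(EG\times_G X)$ is also sound. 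An alternative route that stays purely algebraic and closer to Quillen is to invoke Theorem~\ref{theorem Quillen finiteness 2} for the morphism $(G,X)\to(G,\mathrm{pt})$: then $H^*_G(X)$ is a finite graded module over $H^*_G=H^*(BG)$, and since $BG$ is connected, $H^0_G=k$, so the degree-zero piece $H^0_G(X)$ is a finitely generated $k$-module, hence finite-dimensional. Either route gives the Artinian conclusion.
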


Another ``finite generation" result we shall use is

\begin{theorem}(\cite{Quillen1} Corollary 2.3)\label{theorem Quillen finiteness 2}  
If $(u,f): (G,X) \rightarrow (G',X')$ is a morphism such that $u$ is injective and $H^*(X)$ is a finitely generated $k$-module, then  the ring homomorphism $(u,f)^*:H^*_{G'}(X') \rightarrow H^*_G(X)$ is finite; i.e. with respect to this homomorphism, $H^*_G(X)$ is a finitely generated $H^*_{G'}(X')$-module. 

Also, $H^*_G(X)$ and $H_G(X)$ are  finitely generated graded modules over the commutative ring $H_{G'}(X')$, with respect to the same ring homomorphism $(u,f)^*$, restricted to $H_{G'}(X')$.
\end{theorem}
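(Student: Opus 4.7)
The plan is to factor $(u, f)$ through the induced $G'$-space $Y := G' \times_G X$ and then to apply Theorem~\ref{theorem Quillen finiteness 1} to $(G', Y)$. Let $G'$ act on $Y$ by left multiplication on the first factor; then the inclusion $\iota: X \to Y$, $x \mapsto [1, x]$, is $u$-equivariant, and the map $\alpha: Y \to X'$, $[g', x] \mapsto g' f(x)$, is $G'$-equivariant, with $\alpha \circ \iota = f$. Since $u$ embeds $G$ as a closed subgroup of $G'$, one may take $EG'$ as a model for $EG$, obtaining a natural homeomorphism $\phi: EG \times_G X \xrightarrow{\cong} EG' \times_{G'} Y$ given by $[e, x] \mapsto [e, [1, x]]$, and hence a graded ring isomorphism $\Phi := \phi^*: H^*_{G'}(Y) \xrightarrow{\cong} H^*_G(X)$ under which $\alpha^*$ corresponds to $(u, f)^*$.

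To apply Theorem~\ref{theorem Quillen finiteness 1} to $(G', Y)$, I first observe that $H^*(Y)$ is finitely generated over $k$: the space $Y$ is the total space of a fiber bundle $X \to Y \to G'/G$ whose base is a compact smooth manifold and whose fiber has finitely generated cohomology, so this follows from the Serre spectral sequence. Theorem~\ref{theorem Quillen finiteness 1} then gives that $H^*_{G'}(Y)$ is a finitely generated graded $k$-algebra; moreover, the Serre spectral sequence for the Borel fibration $Y \to EG' \times_{G'} Y \to BG'$, combined with the Venkov--Evens theorem (ensuring that $H^*(BG')$ is a finitely generated $k$-algebra, hence graded-Noetherian), upgrades this to the sharper statement that $H^*_{G'}(Y)$ is finitely generated as an $H^*(BG')$-module.

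The first assertion now follows from the fact that the structural map $H^*(BG') \to H^*_{G'}(Y)$ factors as $H^*(BG') \to H^*_{G'}(X') \xrightarrow{\alpha^*} H^*_{G'}(Y)$, since the constant map $Y \to \mathrm{pt}$ factors through $\alpha$; consequently, any finite $H^*(BG')$-generating set for $H^*_{G'}(Y)$ generates it as well over $H^*_{G'}(X')$ via $\alpha^*$, and transporting through $\Phi$ yields the claim. For the second assertion, note that $H^*(BG')$ is itself finitely generated as a module over its commutative even-degree subring $H^{ev}(BG')$: in odd characteristic, squarefree monomials in a finite set of odd-degree generators of $H^*(BG')$ furnish a finite module basis (odd elements square to zero by graded-commutativity), and in characteristic $2$ the two rings coincide. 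Hence $H^*_{G'}(Y)$ is finitely generated over $H^{ev}(BG')$, and the same factorization argument (applied to the even-degree restriction of the structural map, which factors through $H_{G'}(X')$) shows it is finitely generated over $H_{G'}(X')$ via $\alpha^*|_{H_{G'}(X')}$. Transporting through $\Phi$ gives that $H^*_G(X)$ is finitely generated over $H_{G'}(X')$ via $(u, f)^*|_{H_{G'}(X')}$; since $H_{G'}(X')$ acts through even degrees only, the even summand $H_G(X) = H^{ev}_G(X)$ of $H^*_G(X) = H^{ev}_G(X) \oplus H^{odd}_G(X)$ is a direct summand as an $H_{G'}(X')$-module, hence also finitely generated. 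The principal technical obstacle in this plan is the careful verification of the homeomorphism $\phi$ and the compatibility of $\Phi$ with $(u, f)^*$; once these are in place, the rest reduces to Theorem~\ref{theorem Quillen finiteness 1} and standard graded-commutative algebra.
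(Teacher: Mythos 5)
The paper does not actually prove this theorem; it cites it as Quillen's Corollary~2.3 (with the second paragraph about $H_G(X)$ appended as an easy supplement). Your proof essentially reconstructs Quillen's own argument: reduce to the case $G = G'$ by replacing $(G,X)$ with $(G', G'\times_G X)$, and then factor the structure map $H^*(BG') \to H^*_{G'}(Y)$ through $H^*_{G'}(X')$ via the constant map, so that module-finiteness over $H^*(BG')$ gives module-finiteness over the intermediate ring $H^*_{G'}(X')$. You correctly note that Theorem~\ref{theorem Quillen finiteness 1} as stated in this paper (finite generation of $H^*_G(X)$ as a $k$-algebra) is not quite strong enough, and that one needs the sharper Evens--Venkov-type statement that $H^*_{G'}(Y)$ is a finitely generated $H^*(BG')$-module --- this is indeed what Quillen's Theorem~2.1 provides, and your appeal to it is appropriate. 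Two small points deserve care: the Serre spectral sequence argument for finite generation of $H^*(G'\times_G X)$ should address local coefficients (the $\pi_1(G'/G)$-action on $H^*(X)$ need not be trivial), though the conclusion is standard; and the phrase ``furnish a finite module basis'' for squarefree monomials in odd generators should be ``generating set,'' since there may be relations. Your argument for the second paragraph --- that $H^*(BG')$ is module-finite over $H^{\mathrm{ev}}(BG')$ (odd generators square to zero in odd characteristic), so the same factorization works through $H_{G'}(X')$, and $H_G(X)$ is then an $H_{G'}(X')$-direct summand of $H^*_G(X)$ --- is a clean way to obtain what the paper asserts without proof.
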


When relating the equivariant cohomology ring of a subgroup $G$ to that of a larger group $G'$, the theorem above allows us to take advantage of the theory of \textit{integral extensions} from commutative algebra.  A more thorough explanation of the theory of integral extensions can be found in \cite{Eis}. For a subring $R$ of $S$, an element $s \in S$ is said to be \textit{integral} over $R$ if it is the root of a monic polynomial with coefficients in $R$.  We say that $S$ is integral over $R$ if every element of $S$ is integral over $R$.  When $f:R \rightarrow S$ is a ring homomorphism, we say that $S$ is integral over $R$ (with respect to $f$) when $S$ is integral over its subring $f(R)$. 

Given a ring homomorphism $f:R \rightarrow S$ where $S$ is a finitely generated $R$-module with respect to $f$, a standard result from commutative algebra says that $S$ must also be integral over $R$. In the case of equivariant cohomology rings, if $(u,f): (G,X) \rightarrow (G',X')$ is a morphism such that $u$ is injective and $H^*(X)$ is a finitely generated $k$-module (where $k$ is the coefficient field), we see  that $H_G(X)$ is integral over $H_{G'}(X')$ (with respect to $(u,f)^*$).  For this paper, the fact we need about integral extensions is:

\begin{itemize}
\item ``Lying over": For an integral extension $f: R \rightarrow S$, for every $\p \in Spec(R)$  containing $\ker f$, there exists a $\q \in Spec(S)$ such that $f^{-1}(\q) = \p$.

\end{itemize}

\subsection{Main Theorems of Quillen}
Quillen's main theorems hold in the following setting, which we assume for the remainder of the paper: $G$ is a compact Lie group which acts continuously on a space $X$.  We also require that $X$ is Hausdorff, and that $X$ is either compact, or is paracompact with finite mod-$p$ cohomological dimension (see \cite{Quillen1} for a definition.) Finally, all cohomology is taken with coefficients in the prime field $k \doteq \mathbb{F}_p$ for $p$ a prime number, which is fixed throughout the following discussion, and $H^*(X)$ is a finite dimensional $k$-vector space. 

Recall that an \textit{elementary abelian} $p$-subgroup of $G$ is a subgroup $A$ of $G$ such that $A$ is isomorphic to a direct product of a finite number $r$ of cyclic groups $\mathbb{Z}/p\mathbb{Z}$.  The number of factors $r$ is called the rank of $A$.   Since $p$ will be taken as a fixed prime number throughout we often omit the reference to $p$.

Given a pair $(G,X)$ which satisfy the conditions above, we denote \textit{Quillen's category of pairs} by $\mathcal{Q}(G,X)$.  Objects of this category are pairs $(A,c)$ where $A$ is an elementary abelian $p$-subgroup of $G$, $X^A \neq \emptyset$, and $c$ is a connected component of $X^A$. If $(A,c)$ and $(A',c')$ are objects in $\mathcal{Q}(G,X)$, then there is a morphism between them if there exists an element $g\in G$ such that $gAg^{-1} \leq A'$ and $c' \subseteq gc$.

If there exists a morphism $(A,c) \rightarrow (A',c')$ in $\mathcal{Q}(G,X)$, we say that $(A,c)$ is \textit{subconjugate} to $(A',c')$ or $(A,c) \lesssim (A',c')$.  We define \textit{conjugate} objects, denoted $(A,c) \sim (A',c')$, to be two objects that are isomorphic in $\mathcal{Q}(G,X)$; isomorphism is an equivalence relation on $\mathcal{Q}(G,X)$ and the equivalence class of $(A,c)$ is denoted by $[A,c]$.

By definition, an element $(A,c) \in \mathcal{Q}(G,X)$ is a \textit{maximal} element of $\mathcal{Q}(G,X)$ if and only if any morphism with source $(A,c)$ is an isomorphism.  As Quillen \cite{Quillen2} notes, $(A,c)$ is a maximal element of $\mathcal{Q}(G,X)$ if and only if $A$ is a maximal elementary abelian subgroup of $G_x$ for every $x \in c$.

\begin{definition}
For a $G$-space $X$, $$\mathcal{Q'}(G,X) \doteq \{ [A,c] : (A,c) \in \mathcal{Q}(G,X), \;  (A,c) \;\mbox{is a maximal element of} \; \mathcal{Q}(G,X) \}.$$  Within $\mathcal{Q'}(G,X)$, we are often interested in objects corresponding to  elementary abelian subgroups whose rank is largest, so we define $$\mathcal{Q'}_{max}(G,X) \doteq \{ [A,c]  \in \mathcal{Q'}(G,X) : rk(A) \geq rk(B), \; \forall \; [B,d] \in \mathcal{Q'}(G,X) \}.$$
\end{definition}

We refer the reader to \cite{MilgramPriddy} for an example where $\mathcal{Q'}_{max}(G,X)$ is distinct from $\mathcal{Q'}(G,X)$; in this case, $G=GL_n(\mathbb{Z}/p)$ for $n \geq 4$.

\begin{definition}
Let $(A,c) \in \mathcal{Q}(G,X)$, and pick a point $x_0 \in c$.  We define $\p_{A,c}$ as the kernel of the following composition
$$H_G(X) \xra{res_A^G} H_A(x_0) \rightarrow H_A(x_0)/ \sqrt{0}.$$
\end{definition}

Since $H_A/\sqrt{0}$ is a polynomial ring, using the well-known computation of $H^*_A$, we see that $\p_{A,c} \in Spec(H_G(X))$. 

\begin{theorem} (\cite{Quillen2} Proposition 11.2) \label{theorem Quillen main 2}   For a $G$-space $X$, 
\begin{itemize}
\item  $\p_{(A,c)}\supseteq \p_{(A',c')}$ if and only if $(A,c) \lesssim (A',c').$  Also, $\p_{(A,c)} = \p_{(A',c')} $ if and only if $(A,c) \sim (A',c').$
\item The correspondence $[A,c] \mapsto \p_{(A,c)}$ defines a bijective function between the elements of $\mathcal{Q'}(G,X)$ and the set of minimal prime ideals in $H_G(X).$\end{itemize}
\end{theorem}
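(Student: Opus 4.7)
The plan is to first show that $\p_{A,c}$ depends only on the conjugacy class $[A,c]$, then prove the subconjugacy--containment equivalence of the first bullet (which gives injectivity of the proposed bijection), and finally use Quillen's structural theorems to identify the image of $[A,c] \mapsto \p_{A,c}$ with the set of minimal primes of $H_G(X)$.

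For well-definedness, observe that $A$ acts trivially on its fixed component $c$, so $EA \times_A c \simeq BA \times c$; combining K\"unneth (over the field $k = \mathbb{F}_p$) with connectedness of $c$ and the fact that $H^*(c)$ is a finite-dimensional graded $k$-algebra whose positive-degree part is nilpotent shows that $H_A(c)/\sqrt{0} \cong H_A(\{x_0\})/\sqrt{0}$ for any $x_0 \in c$. Hence $\p_{A,c}$ does not depend on the choice of $x_0$. To see that $\p_{A,c}$ depends only on the class $[A,c]$, note that an isomorphism $(A,c) \to (gAg^{-1}, gc)$ in $\mathcal{Q}(G,X)$ is induced by the inner automorphism $(c_g,\mu_g)$ of $(G,X)$, which acts as the identity on $H_G(X)$ by Lemma~\ref{lemma inner autos act trivially}; the two defining kernels therefore coincide.

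The ``$\Leftarrow$'' direction of the first bullet is a diagram chase: if $(A,c) \lesssim (A',c')$, after replacing $(A,c)$ by a conjugate representative we may assume $A \leq A'$ and $c' \subseteq c$; choosing $x_0 \in c'$, the restriction $H_G(X) \to H_A(x_0)$ factors as $H_G(X) \to H_{A'}(x_0) \to H_A(x_0)$, so passage to nilradicals yields $\p_{A',c'} \subseteq \p_{A,c}$. The ``$\Rightarrow$'' direction is the principal obstacle of the proof, since from a purely algebraic containment of primes one must actually exhibit a group element of $G$ realizing the subconjugation. This is resolved by invoking Quillen's stratification of $\mathrm{Spec}(H_G(X))$, which assigns each $[A,c] \in \mathcal{Q}(G,X)$ a non-empty locally closed stratum whose closure relations mirror the subconjugacy order, so prime containment forces subconjugacy. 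The equality clause in the first bullet then follows by applying both directions in sequence.

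For the second bullet, injectivity of $[A,c] \mapsto \p_{A,c}$ on $\mathcal{Q'}(G,X)$ is immediate from the equality clause just established. For surjectivity onto the minimal primes, Quillen's $F$-isomorphism theorem implies that every prime of $H_G(X)$ contains some $\p_{A,c}$ with $(A,c) \in \mathcal{Q}(G,X)$, so any minimal prime $\p$ must equal such a $\p_{A,c}$; choosing $(A,c)$ maximal in its subconjugacy orbit, the first bullet combined with the minimality of $\p$ forces $[A,c] \in \mathcal{Q'}(G,X)$, completing the bijection. The main obstacle throughout is the hard direction of the first bullet, which genuinely requires the geometric content of Quillen's stratification rather than any formal manipulation of restriction maps or nilradicals.
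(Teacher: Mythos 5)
The paper does not prove this theorem. It is stated as a cited background result---Proposition 11.2 of \cite{Quillen2}---in the section summarizing Quillen's work, and no argument for it appears anywhere in the present text. There is therefore no proof in the paper to compare your proposal against.

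That said, your sketch is a reasonable outline of how Quillen himself establishes the statement, and the easy pieces are correctly argued. The well-definedness step is sound: since $A$ acts trivially on the connected component $c$, the restriction $H_A(c) \to H_A(x_0)$ is independent of $x_0 \in c$, and Lemma~\ref{lemma inner autos act trivially} handles conjugation, so $\p_{(A,c)}$ depends only on $[A,c]$. The ``$\Leftarrow$'' direction of the first bullet via factoring the restriction map through $H_{A'}(x_0)$ is likewise correct. The problem is that everything you identify as hard---the ``$\Rightarrow$'' direction and surjectivity onto minimal primes---is handled by appealing to ``Quillen's stratification'' and ``Quillen's $F$-isomorphism theorem'' as black boxes, without stating precisely which results you mean or how the claimed implications are extracted from them. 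The stratification theorem and Proposition 11.2 are closely intertwined in \cite{Quillen2}, so simply asserting that the stratification ``forces subconjugacy'' from a containment of primes sweeps the actual argument under the rug. If you intend this as a citation-level justification (the statement in question \emph{is} Quillen's Proposition 11.2, and the paper itself only cites it), that is fine; but as a self-contained proof it reduces the hard parts of the theorem to unstated theorems of comparable depth.
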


As a corollary,
\begin{theorem} (\cite{Quillen1})  \label{theorem Quillen main 1}  For a $G$-space $X$, the Krull dimension of $H_G(X)$ equals the maximal rank of an elementary abelian $p$-subgroup such that $X^A \neq \emptyset$. 
\end{theorem}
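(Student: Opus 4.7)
The plan is to deduce the dimension statement directly from the bijection between minimal primes and $\mathcal{Q}'(G,X)$ given by Theorem \ref{theorem Quillen main 2}, by computing the dimension of each quotient $H_G(X)/\p_{A,c}$ explicitly. Recall the standard fact that for a Noetherian commutative ring $R$, $\dim(R) = \max\{\dim(R/\p) : \p \text{ a minimal prime of } R\}$; combined with the second bullet of Theorem \ref{theorem Quillen main 2}, this gives
\[
\dim(H_G(X)) = \max_{[A,c] \in \mathcal{Q}'(G,X)} \dim\bigl(H_G(X)/\p_{A,c}\bigr).
\]
So the core computation is $\dim(H_G(X)/\p_{A,c}) = \mathrm{rk}(A)$.

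To carry this out, fix a representative $(A,c)$ with $x_0 \in c$. By definition, $\p_{A,c}$ is the kernel of the composition $H_G(X) \xra{res_A^G} H_A(x_0) \to H_A(x_0)/\sqrt{0}$, so $H_G(X)/\p_{A,c}$ embeds as a subring of $H_A(x_0)/\sqrt{0}$. Now $H_A(x_0) = H^*_A(\mathrm{pt},k)$, which for $A$ elementary abelian of rank $r$ is well known to have $H_A(x_0)/\sqrt{0} \cong k[t_1,\ldots,t_r]$, a polynomial ring of Krull dimension $r = \mathrm{rk}(A)$. Applying Theorem \ref{theorem Quillen finiteness 2} to the morphism $(A,\{x_0\}) \hookrightarrow (G,X)$ (with $u$ the inclusion), the restriction map makes $H_A(x_0)$ a finitely generated $H_G(X)$-module; passing to the quotient by $\sqrt{0}$ and restricting scalars through $H_G(X)/\p_{A,c}$, we obtain that $H_A(x_0)/\sqrt{0}$ is integral over $H_G(X)/\p_{A,c}$. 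Since Krull dimension is preserved under integral extensions, we conclude $\dim(H_G(X)/\p_{A,c}) = r = \mathrm{rk}(A)$, as required.

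Combining these,
\[
\dim(H_G(X)) = \max\{\mathrm{rk}(A) : [A,c] \in \mathcal{Q}'(G,X)\}.
\]
The remaining step is to identify the right-hand side with the maximal rank of any elementary abelian $p$-subgroup $B \leq G$ with $X^B \neq \emptyset$. One direction is immediate, since every $(A,c) \in \mathcal{Q}'(G,X)$ supplies such a $B = A$. For the other, given any elementary abelian $B$ with $X^B \neq \emptyset$, pick a component $d$ of $X^B$; then $(B,d) \in \mathcal{Q}(G,X)$, and since $\mathcal{Q}(G,X)$ has finitely many isomorphism classes (a standard consequence of $H^*(X)$ being finite-dimensional and the structure of $\mathcal{Q}$), $(B,d)$ is subconjugate to some maximal $(A,c)$, giving $\mathrm{rk}(A) \geq \mathrm{rk}(B)$.

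I do not anticipate a significant obstacle: all the heavy lifting is in Theorem \ref{theorem Quillen main 2}, and the remaining computation is a dimension count via the integral extension furnished by Theorem \ref{theorem Quillen finiteness 2}. The only place where one has to be a little careful is in identifying $H_A(x_0)/\sqrt{0}$ with a polynomial ring of the correct dimension, which requires separating the cases $p=2$ and $p$ odd in the definition of $H_G$ (so that the exterior generators that appear at odd primes are killed by $\sqrt{0}$).
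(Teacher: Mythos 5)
Your proof is correct and fills in exactly the deduction the paper leaves implicit: the paper states this theorem as a corollary of Theorem~\ref{theorem Quillen main 2} citing Quillen, without giving details, and your argument (dimension of a Noetherian ring is the max of $\dim(R/\p)$ over minimal primes $\p$, the bijection of minimal primes with $\mathcal{Q}'(G,X)$, and the identification $\dim(H_G(X)/\p_{A,c}) = \mathrm{rk}(A)$ via the injective integral extension into $H_A/\sqrt{0}$) is the natural way to realize that deduction. One minor simplification: to see that every $(B,d)$ is subconjugate to a maximal object you do not need the finiteness of isomorphism classes in $\mathcal{Q}(G,X)$; since a non-isomorphism $(A,c) \lesssim (A',c')$ strictly increases $\mathrm{rk}(A)$ (equal ranks force an isomorphism, as $gc$ and $c'$ are then both components of $X^{A'}$ with $c' \subseteq gc$), and ranks are bounded for a compact Lie group, any chain terminates.
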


For example, in the case where $X$ is a point, Quillen's theorem says that Krull dimension of $H_G$ equals the maximal rank of an elementary abelian $p$-group in $G$.

\subsection{Quillen's Magic Space $F$}

For any compact Lie group $G$, there exists a (not unique) unitary group $U$ and a closed embedding of $G$ into $U$. Given a compact Lie group $G$, we often begin by fixing such an embedding. Then, define $S$ to be the set of diagonal matrices of order $p$ in $U$, where $p$ is a fixed prime.  Observe that $U$ is equipped with both a left $G$-action and a right $S$-action.  

The topological space  $F\doteq U/S$ then has a natural $G$-action: if $g \in G$ and $uS \in F$, then the action of $G$ on $F$ is defined by $g \cdot(uS) \doteq guS$. Quillen showed how the product space $X \times F$ can be used to encode information about how $G$ acts on $X$. In fact, Quillen used the following result to prove the two main theorems of the last section(\ref{theorem Quillen main 1}, \ref{theorem Quillen main 2}), and we make use of this result in the next section.  Recall that a sequence of modules  $\xymatrix{ A \ar[r]^h &B \ar@<2pt>[r]^f \ar@<-2pt>[r]_g &C}$ is called an \textit{equalizer sequence} when $0 \xra{} A \xra{h} B \xra{f-g} C$ is an exact sequence.

\begin{lemma}(\cite{Quillen1}, Lemma 6.5 and preceding discussion) \label{lemma Q formula}
The following is an equalizer sequence of $H_G(X)$-modules:
$$H^*_G(X) \rightarrow H^*_G(X \times F) \rightrightarrows H^*_G(X \times F \times F);$$
defined by applying the equivariant cohomology functor to the sequence \begin{equation*}
\xymatrix{
X \times F \times F  \ar@<-.5mm>[r]_{\pi_{13}} \ar@<.5mm>[r]^{\pi_{12}} & X \times F \ar[r]^{\pi_1} & X,  \\
}
\end{equation*} where $\pi$ is the projection map onto the indicated components. 

Quillen then shows there is an isomorphism of $H_G(X)$-modules
$$H^*_G(X \times F) \cong H^*_G(X) \otimes_{k} H^*(F).$$  Here, if $r \in H_G(X)$, $x \otimes y \in H^*_G(X) \otimes_{k} H^*(F)$, then the $H_G(X)$-module structure is defined by $r(x \otimes y) \doteq (rx) \otimes y$.
\end{lemma}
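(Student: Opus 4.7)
The plan is to reduce the statement to a Leray--Hirsch theorem for the Borel construction on the projection $X\times F\to X$, then extract the equalizer from connectivity of $F$. Since the projection $\pi_1 \colon X\times F\to X$ is $G$-equivariant, applying the Borel construction yields a fiber bundle
\[
F \longrightarrow EG\times_G(X\times F) \longrightarrow EG\times_G X,
\]
and similarly for $X\times F\times F$. Everything will follow from establishing a Leray--Hirsch isomorphism $H^*_G(X\times F)\cong H^*_G(X)\otimes_k H^*(F)$ compatible with the three projections.

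The key geometric input is the inclusion $G\hookrightarrow U$ that defines $F = U/S$. I would first observe that $EU\times_U F = EU\times_U(U/S) \simeq EU/S = BS$, and so the $G$-equivariant map $EG\to EU$ induces a natural map $EG\times_G F \to BS$. Pulling back the polynomial generators $x_1,\dots,x_n\in H^*(BS)$ produces classes in $H^*_G(F)$, and then via the projection $X\times F\to F$ classes $y_1,\dots,y_n \in H^*_G(X\times F)$. The fiber restriction $H^*_G(X\times F)\to H^*(F)$ factors the restriction $H^*(BS)\to H^*(F)$ coming from the fibration $F\to BS \to BU$; since $H^*(BS) = k[x_1,\dots,x_n]$ is a free $H^*(BU)$-module of rank $n!$ via elementary symmetric polynomials in the $x_i$, we get $H^*(F) \cong k[x_1,\dots,x_n]/(\sigma_1,\dots,\sigma_n)$, which is a finite-dimensional free $k$-module whose basis is given by suitable monomials in the $x_i$. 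Thus the same monomials in the $y_i$ restrict to a $k$-basis of $H^*(F)$ in each fiber, and the Leray--Hirsch theorem produces the required $H^*_G(X)$-module isomorphism $H^*_G(X\times F) \cong H^*_G(X)\otimes_k H^*(F)$, and analogously $H^*_G(X\times F\times F) \cong H^*_G(X)\otimes_k H^*(F)\otimes_k H^*(F)$.

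With these identifications in place, the two maps $\pi_{12}^*,\pi_{13}^*$ become $\pi_{12}^*(a\otimes y) = a\otimes y \otimes 1$ and $\pi_{13}^*(a\otimes y) = a\otimes 1\otimes y$, while $\pi_1^*(a) = a\otimes 1$. Because $F$ is connected (as a quotient of the connected group $U$), $H^0(F) = k$, so a tensor $a\otimes y\otimes 1$ equals $a\otimes 1 \otimes y$ in the free $k$-module $H^*_G(X)\otimes H^*(F)\otimes H^*(F)$ only when $y\in H^0(F)$, and then both sides equal $(y\cdot a)\otimes 1\otimes 1$. Hence the equalizer is $H^*_G(X)\otimes 1 = \pi_1^*(H^*_G(X))$, and the kernel of $\pi_{12}^*-\pi_{13}^*$ is precisely the image of $\pi_1^*$, which establishes the claimed equalizer sequence.

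The main obstacle is the Leray--Hirsch step: verifying that the classes constructed from $H^*(BS)$ really do restrict to a $k$-basis of $H^*(F)$ fiberwise and that the hypotheses on $X$ (compactness or finite mod-$p$ cohomological dimension together with $\dim_k H^*(X) < \infty$) suffice to run the Serre spectral sequence. The first point reduces to a purely algebraic computation with the Borel presentation of $H^*(U/S)$ as a coinvariant algebra of $(\Z/p)^n \subset T^n \subset U(n)$, which is standard; the second is exactly the setting in which Quillen's hypotheses were chosen, so no additional topological input is needed beyond what is already assumed in Section~2.
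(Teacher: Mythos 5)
Your strategy---deduce the equalizer sequence from a Leray--Hirsch decomposition $H^*_G(X\times F)\cong H^*_G(X)\otimes_k H^*(F)$, verified by pulling back classes from $BS=EU\times_U F$---is essentially Quillen's own argument, which is all the paper cites here; the paper offers no independent proof to compare against. So the approach is the right one.

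Several details need repair, and one of them would break the Leray--Hirsch verification when $p$ is odd. For odd $p$, $H^*(BS;\mathbb{F}_p)=k[x_1,\dots,x_n]\otimes\Lambda[y_1,\dots,y_n]$ with $\deg x_i=2$, $\deg y_i=1$; pulling back only the polynomial generators $x_i$ gives classes in even degrees, which cannot restrict to a $k$-basis of $H^*(F)$ since $F$ has nonzero odd-degree cohomology (already $F\cong S^1$ when $n=1$). You must also pull back the degree-one exterior generators. Relatedly, the presentation $H^*(F)\cong k[x_1,\dots,x_n]/(\sigma_1,\dots,\sigma_n)$ has the degrees wrong even for $p=2$: there $H^*(BU)\to H^*(BS)$ sends $c_i$ to $\sigma_i(x_1^2,\dots,x_n^2)$, so $\dim_k H^*(F)=2^n\,n!$, not $n!$ (for $n=1$ one has $\dim_k H^*(S^1;\mathbb{F}_2)=2$, not $1$). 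None of this spoils the plan---what you actually need is only that $H^*(BS)\to H^*(F)$ is onto and $H^*(F)$ is finite-dimensional, both true---but the formulas as written are incorrect. Finally, the computation of the equalizer should not be carried out only for elementary tensors $a\otimes y$: fix a homogeneous $k$-basis $\{1=y_0,y_1,\dots\}$ of $H^*(F)$, write a general element as $\sum_i a_i\otimes y_i$, apply $\pi_{12}^*-\pi_{13}^*$, and use linear independence of $\{y_i\otimes 1 - 1\otimes y_i : i\ge 1\}$ in $H^*(F)\otimes_k H^*(F)$ to conclude that only the $a_0\otimes 1$ component survives. With these corrections the argument is sound and matches Quillen's route.
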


\section{Summary of Duflot's Localization Results}
The material developed in this section is leveraged to prove the main theorem of this paper (Theorem \ref{theorem main thm geometric degree}). We do not present the details of proofs here, but we do give some discussion. 
\begin{definition}
Let $G$ be a group, $X$ a $G$-space, and let $(A,c) \in \mathcal{Q}(G,X)$.
\begin{itemize}
\item $N_G(A,c) \doteq \{g \in G: gAg^{-1}=A \} \cap \{g \in G : gc =c \}$
\item $C_G(A,c) \doteq \{g \in G : ga = ag, \; \forall a \in A\} \cap \{g \in G : gc=c\}$
\item $W_G(A,c) \doteq N_G(A,c)/C_G(A,c)$
\end{itemize}
In the case that $X$ is just a point, we omit the $c$'s from the notation. e.g. $N_G(A)$ instead of $N_G(A,c)$.
\end{definition}

If $Z$ is any $G$-space and $(A,c) \in \mathcal{Q}(G,Z)$, then $N_G(A,c)$ acts on $Z^A$, since $N_G(A,c) \subseteq N_G(A)$; of course, $N_G(A,c)$ acts on $c$ as well.  We will also need the following notation:  if $Y \subseteq Z$, then $G\cdot Y \doteq \{ g\cdot y \mid y \in Y\}$ is the indicated $G$-subspace of $Z$.

Recall some definitions from graded commutative algebra:  if $L$ is a $\mathbb{Z}$-graded commutative ring, $M$ is a graded $A$-module and $S$ is a multiplicatively closed subset consisting entirely of homogeneous elements of $L$, then the abelian group $S^{-1}M$ has a grading in the usual way:  the homogenous elements of $S^{-1}M$ of degree $j$ are the elements $m/s$ where $m \in M$ is homogeneous and $j = deg(m)-deg(s)$.  With this definition, $S^{-1}L$ is a graded ring, and $S^{-1}M$ is a graded $S^{-1}L$-module.  
\begin{definition}  Let $\p$ be a graded prime ideal in $L$, and $M$ a graded $L$-module.
\begin{itemize}
\item $M_{[\p]}$ is the localization $S^{-1}M$ where $S$ is the set of homogeneous elements of $L$ not in $\p$.  We will say that $M_{[\p]}$ is the \textit{graded localization} of $M$ at $\p$.
\item $M_{(\p)}$ is the ungraded (or ``concentrated in degree zero") module consisting of the elements of $M_{[\p]}$ of degree zero.
\item $M_{\p}$ is the ungraded module $\tilde{S}^{-1}M$ where $\tilde{S}$ is the set of all elements of $L$ not in $\p$.
\end{itemize}
\end{definition}

We won't use the second notion of localization in the list above, which is that used in algebraic geometry \cite{GRO}, in this paper.  We'll say that the third type of localization above is ``ordinary" localization.

The main result of \cite{Duflot} is: 
\begin{theorem} (\cite{Duflot} Theorem 3.2)
\label{theorem Duflot localization}
Suppose that $[A,c] \in \mathcal{Q'}(G,X)$. Then there is an isomorphism of $H_G(X)_{[\p_{(A,c)}]}$-modules $$H^*_G(X)_{[\p_{(A,c)}]} \xra{(res^G_{C_G(A,c)})_{[\p_{(A,c)}]}} H^*_{C_G(A,c)}(c)^{W_G(A,c)}_{[\p_{(A,c)}]}.$$
\end{theorem}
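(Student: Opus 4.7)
The plan is to apply Quillen's equalizer description of $H^*_G(X)$ from Lemma \ref{lemma Q formula} and to exploit an orbit-type stratification of the magic space $F = U/S$, so that after localizing at $\p_{(A,c)}$ all contributions except the one associated to $[A,c]$ drop out. The starting observation is that the isotropy of a point $uS \in F$ under $G$ is governed by how $u^{-1}Gu$ intersects $S$, so both $F$ and $X \times F$ carry natural $G$-invariant stratifications indexed by the conjugacy classes $[B,d] \in \mathcal{Q}(G,X)$. Because graded localization is exact, the equalizer from Lemma \ref{lemma Q formula} remains an equalizer after localizing each term at $\p_{(A,c)}$, so the problem reduces to computing $H^*_G(X \times F)_{[\p_{(A,c)}]}$ and $H^*_G(X \times F \times F)_{[\p_{(A,c)}]}$ in terms of this stratification.

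The second step is a localization-and-vanishing argument on the filtration. For each stratum indexed by $[B,d]$ I would use the long exact sequences of the filtration by closed $G$-invariant subsets, together with Theorem \ref{theorem Quillen main 2}, to show that the equivariant cohomology of any stratum indexed by a class $[B,d]$ with $(B,d) \not\lesssim (A,c)$ vanishes after localizing at $\p_{(A,c)}$: such strata have minimal primes located among the $\p_{(B,d)}$ with $\p_{(B,d)} \not\subseteq \p_{(A,c)}$, so they contain homogeneous elements which become units in the localization and hence annihilate the module. Maximality of $(A,c)$ in $\mathcal{Q'}(G,X)$ then ensures that the only surviving contribution is the stratum for $[A,c]$ itself, which up to closure issues is the $G$-orbit of $c \times F^A_{\mathrm{reg}}$, where $F^A_{\mathrm{reg}}$ denotes the open part of $F^A$ whose isotropy equals a conjugate of $A$.

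Third, I would identify this surviving stratum. A slice argument gives $H^*_G(G \cdot (c \times F^A_{\mathrm{reg}})) \cong H^*_{N_G(A,c)}(c \times F^A_{\mathrm{reg}})$; since $A$ acts trivially on $c$ and the residual cohomology factor coming from $F^A_{\mathrm{reg}}$ becomes a unit after localization, this reduces to $H^*_{N_G(A,c)}(c)$. Finally, using that $N_G(A,c)/C_G(A,c) = W_G(A,c)$ is a finite group and using Lemma \ref{lemma inner autos act trivially} to see that the $W_G(A,c)$-action on $H^*_{C_G(A,c)}(c)$ is the one induced by restriction from $N_G(A,c)$, one identifies $H^*_{N_G(A,c)}(c)$ with the invariants $H^*_{C_G(A,c)}(c)^{W_G(A,c)}$ (possibly only after localization, if transfer obstructions have to be inverted), and a final check confirms that the resulting composite isomorphism is the localization of $res^G_{C_G(A,c)}$.

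The main obstacle, as I see it, is step two: making precise the claim that the filtration long exact sequences degenerate to an isomorphism after localization, and correctly pairing these vanishing results with the Quillen equalizer. One must simultaneously track how the two projection maps of Lemma \ref{lemma Q formula} act on the stratum-by-stratum decomposition and show that precisely the $W_G(A,c)$-invariant part survives, so that the Weyl action one sees at the end is the genuine one and not an artifact of the stratification.
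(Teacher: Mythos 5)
The theorem is cited from \cite{Duflot}, Theorem 3.2, rather than reproved, but the paper's outline through the sequence ($\star$) and Theorem \ref{theorem Duflot diagram} makes the intended mechanism visible, and your proposal departs from it in a way that creates a genuine gap. Your Step 1 and the beginning of Step 3 are consonant with the outline: pass to $X\times F$ via the equalizer, localize, restrict to $G\cdot(c\times F^A)$, and pass to $N_G(A,c)$ by the induction isomorphism \textcircled{2}. The missing idea is that Quillen's equalizer sequence (Lemma \ref{lemma Q formula}) is used \emph{twice} in Duflot's proof --- once for $(G,X)$ and once for $(C_G(A,c),c)$ --- whereas you use it only once and try to compensate at the end of Step 3 by (a) stripping the $F^A$-factor from $H^*_{N_G(A,c)}(c\times F^A)$ ``because it becomes a unit'' and (b) identifying $H^*_{N_G(A,c)}(c)$ with $H^*_{C_G(A,c)}(c)^{W_G(A,c)}$. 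Neither step is valid. For (b): the identification $H^*_{N_G(A,c)}(-)\cong H^*_{C_G(A,c)}(-)^{W_G(A,c)}$ comes from the collapse of the Lyndon--Hochschild--Serre spectral sequence for $C_G(A,c)\trianglelefteq N_G(A,c)$, and it collapses precisely because $H^*_{C_G(A,c)}(c\times F^A)$ is a \emph{free} $k[W_G(A,c)]$-module (Corollary \ref{corollary W acts freely}), which in turn rests on the free $W_G(A,c)$-action on the set of components of $c\times F^A$ (Lemma \ref{lemma W acts freely}). There is no such freeness for $H^*_{C_G(A,c)}(c)$ itself, and since $k=\mathbb{F}_p$ and $p$ may divide $|W_G(A,c)|$, transfer cannot rescue the claim, with or without localization. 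For (a): $H^*_{N_G(A,c)}(c\times F^A)$ is not the tensor product of $H^*_{N_G(A,c)}(c)$ with an invertible factor; in Duflot's argument the $F$-factor is removed only at the very end, by the restriction isomorphism $H^*_{C_G(A,c)}(c\times F)_{[\p]}\to H^*_{C_G(A,c)}(c\times F^A)_{[\p]}$ (arrow \textcircled{4} of Theorem \ref{theorem Duflot diagram}) followed by the \emph{second} equalizer for $(C_G(A,c),c)$ applied to the $W$-invariant rows of the diagram.

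Your Step 2 stratification heuristic also needs tightening. An element of $\p_{(B,d)}\setminus\p_{(A,c)}$ becomes a unit after localizing, but being in $\p_{(B,d)}$ does not mean it annihilates $H^*_G$ of the corresponding stratum --- it only says its image in $H_A(x_0)/\sqrt{0}$-type target is zero. To conclude that a localized module vanishes you need \emph{all} of its associated primes over $H_G(X)$ to lie outside $\p_{(A,c)}$, not merely its minimal primes, and this is exactly the content of Lemma 3.3 of \cite{Duflot}, which your proposal would have to reprove in the filtered setting. The paper bypasses the stratification entirely and uses this lemma for the single restriction map to the closed subspace $G\cdot(c\times F^A)$ (arrow \textcircled{1}), which is the cleaner route.
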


\begin{remark}In the original paper \cite{Duflot}, it was not clear whether the localization was ordinary localization or graded localization. In \cite{Duflot3}, Duflot clarifies this, and we've added the brackets here to indicate graded localization.   \end{remark}
 
Let's discuss this result a bit, giving an outline of the proof.  Our outline focuses on results we will need in the final section of the paper, and we do not give proofs. Suppose that an embedding of $G$ into a unitary group $U$ is fixed, and $F$ is the $G$-space $U/S$ as before.  

First,  $N_G(A,c)$ acts on $c$ and hence on $c \times F^i$ and on $c \times (F^A)^i$ diagonally.   Furthermore, these actions induce actions of  $W_G(A,c)$ on $H^*_{C_G(A,c)}(Z)$, whether $Z$ is $c, c \times F^i$ or $c \times (F^A)^i$:    for any $n \in N_G(A,c)$, we have the equivariant conjugation map on the pair $(C_G(A,c),Z) \rightarrow (C_G(A,c),Z)$ which is the pair $g \mapsto ngn^{-1}$ and $z \mapsto n\cdot z$.  Refer to this map as $ n$, and the induced ring automorphism on equivariant cohomology as $n^*: H^*_{C_G(A,c)}(Z) \rightarrow H^*_{C_G(A,c)}(Z)$.  Recall that inner automorphisms act trivially on equivariant cohomology, therefore if $n \in C_G(A,c)$ then $n^*=id$ on $H^*_{C_G(A,c)}(Z)$. Therefore, $W_G(A,c)$ has a well-defined action on $H^*_{C_G(A,c)}(Z)$. 

Now, for  the spaces $Z$ in the paragraph above, there are equivariant maps $\theta:(C_G(A,c), Z) \rightarrow (G,X)$:   for the groups involved, we use inclusions, and for the spaces, if $Z = c$,  we use inclusion $c \rightarrow X$, while  for $Z = c \times F^i$ or $Z = c \times (F^A)^i$, $\theta$ is the inclusion $Z \rightarrow X \times -$, followed by projection onto the first factor. We point out that all of the induced maps on equivariant cohomology  $\theta^*:H^*_G(X) \rightarrow H^*_{C_G(A,c)}(Z)$ have the property that, for every (homogeneous) $x \in H^*_G(X), y \in H^*_{C_G(A,c)}(Z)$ and every $w \in W_G(A,c)$, $\theta^*(x)(w \cdot y) = w \cdot (\theta^*(x)y)$; in other words, the automorphisms induced by the elements of $W_G(A,c)$  on $H^*_{C_G(A,c)}(Z)$ are $H_G(X)$-module maps.  Note that this implies the image of $\theta^*$ lands in the invariant subring $H^*_{C_G(A,c)}(Z)^W$ since every $w \in W$ induces a ring automorphism preserving the multiplicative identity $1$ of the ring: $$ \theta^*(x) = \theta^*(x)(w \cdot 1) = w \cdot \theta^*(x).$$

To see this, look at the commutative diagrams below; $n$ is always the map $g \mapsto ngn^{-1}$, $z \mapsto nz$, for every $n \in N_G(A,c)$:
 
 \begin{equation*}
\xymatrix{
(G,X) \ar[r]^{ n}  & (G,X)\\
(C_G(a,c),Z) \ar[u]^{\theta} \ar[r]^{ n} & (C_G(A,c),Z) \ar[u]_{\theta}\\
}
\end{equation*}
Thus there is a commutative diagram
\begin{equation*}
\xymatrix{
H^*_G(X) \ar[d]_{\theta^*} & H^*_G(X) \ar[l]_{n^*} \ar[d]_{\theta^*} \\
H^*_{C_G(A,c)}(Z)  & H^*_{C_G(A,c)}(Z)  \ar[l]_{n^*} \\
}
\end{equation*}
Now, lemma \ref{lemma inner autos act trivially} implies that the upper ring homomorphism $n^*$ on $H^*_G(X)$ is the identity map, so commutativity implies that 
$$n^*(\theta^*(x)y) = n^*(\theta^*(x))n^*(y) = \theta^*(n^*(x))n^*(y) = \theta^*(x)n^*(y),$$ which is our point:  by definition, if $w$ is represented by $n \in N_G(A,c)$, $w\cdot ? \doteq n^*(?)$.

Duflot passes a problem about $H^*_G(X)$ to the space $H^*_G(X \times F)$ using Quillen's equalizer sequence  \ref{lemma Q formula}; first, the following two results are proved:
\begin{lemma}\label{lemma W acts freely} (\cite{Duflot} pg. 98, 99) Suppose that $X$ is a $G$-space and  $(A,c) \in \mathcal{Q}(G,X)$; note that the subgroup $N_G(A,c)$ of $G$ acts  on $c \times F^A$ (diagonally).
\begin{itemize}
\item[a)] The set of components of $c \times (F^A)^i$, $\pi_0(c \times (F^A)^i)$,  is a finite set.   The action of $N_G(A,c)$ on $c \times (F^A)^i$ induces an action of $N_G(A,c)$ on $\pi_0(c \times (F^A)^i)$ such that  if $d$ is a component of $c \times (F^A)^i$, then for every $g \in C_G(A,c)$, $g \cdot d = d.$  Therefore, $W_G(A,c)$ acts on the set of components of $c \times (F^A)^i.$
\item[b)] 
$W_G(A,c)$ acts freely on the set of components of $c \times (F^A)^i, i=1,2$.
\end{itemize}
\end{lemma}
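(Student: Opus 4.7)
\medskip

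\noindent\textbf{Proof proposal.} My plan is to first pin down the component structure of $F^A$, from which both claims will follow readily. Recall that $uS \in F^A$ iff $u^{-1}Au \subseteq S$, and in that case $\phi_u : A \to S$, $a \mapsto u^{-1}au$, is an injective group homomorphism. The assignment $uS \mapsto \phi_u$ is well-defined on $F^A$ because $S$ is abelian (so right-multiplication of $u$ by an $s \in S$ leaves the map unchanged), and two cosets $uS, vS \in F^A$ yield the same $\phi$ iff $vu^{-1} \in C_U(A) = U^A$. Since $U^A$ is a product of smaller unitary groups it is connected, so the fibers of the map $F^A \to \mathrm{Hom}(A,S)$ are precisely the $U^A$-orbits under left multiplication, and these are the connected components of $F^A$. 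Consequently $\pi_0(F^A)$ is finite (embedding into the finite set $\mathrm{Hom}(A,S)$), and since $c$ is connected by definition, the components of $c \times (F^A)^i$ are the products $c \times d_1 \times \cdots \times d_i$ with each $d_j \in \pi_0(F^A)$, giving the finiteness claim in (a).

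For the remaining part of (a), I would first verify that $N_G(A,c) \subseteq N_G(A)$ preserves $F^A$: if $u^{-1}Au \subseteq S$ and $n \in N_G(A)$, then $(nu)^{-1}A(nu) = u^{-1}(n^{-1}An)u = u^{-1}Au \subseteq S$. So the diagonal action of $N_G(A,c)$ on $c \times (F^A)^i$ is well-defined and permutes components. For $g \in C_G(A,c)$, we have $g \in C_U(A) = U^A$, hence $g \cdot (u_j S) = g u_j S$ lies in the same $U^A$-orbit as $u_j S$, i.e., in the same component $d_j$; together with $gc = c$ (and $c$ being a single component), this shows $g$ stabilizes every component of $c \times (F^A)^i$ setwise. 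Hence the induced $N_G(A,c)$-action on $\pi_0(c \times (F^A)^i)$ factors through the quotient $W_G(A,c) = N_G(A,c)/C_G(A,c)$.

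For (b), suppose $n \in N_G(A,c)$ stabilizes a component $d = c \times d_1 \times \cdots \times d_i$ with $i \geq 1$. Pick any $u_1 S \in d_1$ with associated homomorphism $\phi_1 = \phi_{u_1}$. The condition $n \cdot d_1 = d_1$ means $nu_1 S$ and $u_1 S$ lie in the same fiber over $\mathrm{Hom}(A,S)$; but $\phi_{nu_1}(a) = (nu_1)^{-1}a(nu_1) = u_1^{-1}(n^{-1}an)u_1 = \phi_1(n^{-1}an)$, so the equality $\phi_{nu_1} = \phi_1$ forces $\phi_1 \circ c_{n^{-1}}|_A = \phi_1$. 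Injectivity of $\phi_1$ then yields $c_{n^{-1}}|_A = \mathrm{id}_A$, i.e., $n^{-1}an = a$ for all $a \in A$, so $n \in C_G(A)$. Combined with $nc = c$, this gives $n \in C_G(A,c)$, so $[n] \in W_G(A,c)$ is trivial, proving freeness.

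The main obstacle I anticipate is the assertion that the connected components of $F^A$ are exactly the $U^A$-orbits; this hinges on the connectedness of $U^A$, which in turn rests on the structural fact that the centralizer in a unitary group of a finite abelian subgroup decomposes as a product of smaller unitary groups (one per isotypic component of the standard representation restricted to the subgroup). Once this is in hand, both parts reduce to bookkeeping with the injective homomorphisms $\phi_u : A \to S$, and the restriction to $i \geq 1$ in (b) is exactly what is needed to have at least one such $\phi_u$ available to exploit.
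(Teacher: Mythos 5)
Your argument is correct and is essentially the proof in the cited source (\cite{Duflot}, pp.\ 98--99, which in turn rests on Quillen's analysis of $\pi_0(F^A)$): parametrize the components of $F^A$ by injective homomorphisms $A\to S$ via $uS\mapsto(a\mapsto u^{-1}au)$, using the connectedness of $C_U(A)$ (a product of unitary groups over the isotypic pieces) to see that the fibers of this locally constant map are exactly the components; then note $C_G(A,c)\subseteq C_U(A)$ fixes each component, while any $n\in N_G(A,c)$ stabilizing a component $c\times d_1\times\cdots\times d_i$ with $i\geq 1$ satisfies $\phi_{u_1}\circ c_{n^{-1}}|_A=\phi_{u_1}$, hence centralizes $A$ by injectivity of $\phi_{u_1}$. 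The paper itself only cites Duflot for this lemma, so there is no in-text proof to diverge from, but your route matches the standard one.
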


Although the corollary below is stated in \cite{Duflot}, we'll provide details about why it follows from \ref{lemma W acts freely} later in this paper, in \ref{theorem length of W action}.
\begin{corollary} (\cite{Duflot} Lemma 3.5) \label{corollary W acts freely}
If $(A,c) \in \mathcal{Q}(G,X)$, then $H^q_{C_G(A,c)}(c \times (F^A)^i)$ is a free $k[W_G(A,c)]$-module for all $i \geq 1$ and all $q \geq 0$. 
\end{corollary}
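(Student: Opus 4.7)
The plan is to decompose $H^q_{C_G(A,c)}(c \times (F^A)^i)$ as a direct sum indexed by connected components and then group those components into $W_G(A,c)$-orbits, using the freeness of the $W$-action on $\pi_0$ to identify each orbit's contribution with a free $k[W_G(A,c)]$-summand. Write $W = W_G(A,c)$ and $C = C_G(A,c)$ for brevity, and note that $W$ is automatically finite, since it embeds into $N_G(A)/C_G(A) \hookrightarrow \mathrm{Aut}(A)$.

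First I would invoke Lemma \ref{lemma W acts freely}(a): because every $g \in C$ fixes each component of $c \times (F^A)^i$ setwise, equivariant cohomology splits as a $k$-vector space direct sum
\[
H^q_C(c \times (F^A)^i) \;\cong\; \bigoplus_{d \in \pi_0(c \times (F^A)^i)} H^q_C(d).
\]
The natural $W$-action on the left side permutes these summands: for $w \in W$ represented by $n \in N_G(A,c)$, the equivariant pair $(c_n,\mu_n)$ induces an isomorphism from the $d$-summand onto the $(w \cdot d)$-summand, and this map is independent of the choice of lift $n$ since two such lifts differ by an element of $C$ and inner automorphisms act trivially on $H^*_C$ by Lemma \ref{lemma inner autos act trivially}.

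Next I would partition the components into $W$-orbits $\pi_0(c \times (F^A)^i) = \bigsqcup_\alpha \mathcal{O}_\alpha$. By Lemma \ref{lemma W acts freely}(b), $W$ acts freely on each orbit, so choosing a base component $d_\alpha \in \mathcal{O}_\alpha$ gives a bijection $W \to \mathcal{O}_\alpha$, $w \mapsto w \cdot d_\alpha$. Fixing any $k$-basis $B_\alpha$ of $H^q_C(d_\alpha)$, the elements $\{w \cdot b : w \in W,\, b \in B_\alpha\}$ then form a $k$-basis of the orbit summand $\bigoplus_{d \in \mathcal{O}_\alpha} H^q_C(d)$, and on this basis $W$ acts by left multiplication as in the regular representation, $w' \cdot (w \cdot b) = (w'w) \cdot b$. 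Hence each orbit contributes a free $k[W]$-module of rank $|B_\alpha|$, and the total cohomology is free over $k[W]$ after summing over $\alpha$.

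The main point that will need care, rather than a genuine mathematical obstacle, is the bookkeeping that ties the $W$-action on $\pi_0$ to the permutation of summands in the component decomposition; in particular, the well-definedness of the basis transport $b \mapsto w\cdot b$ under changes of lift, which reduces cleanly to Lemma \ref{lemma inner autos act trivially}. Once that identification is in hand, the freeness of the $W$-action on components immediately produces the regular representation pattern on each orbit, and the corollary follows.
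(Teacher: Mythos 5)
Your argument is correct and in essence the same as the paper's: decompose $H^q_{C_G(A,c)}(c\times(F^A)^i)$ over the finite set of components (valid since Lemma~\ref{lemma W acts freely}(a) says $C_G(A,c)$ preserves each component), observe via Lemma~\ref{lemma inner autos act trivially} that $W_G(A,c)$ acts by permuting summands compatibly with its free action on $\pi_0$, and conclude that each $W$-orbit contributes a regular-representation block, hence a free $k[W_G(A,c)]$-module. The paper packages exactly this reasoning as the general algebraic Theorem~\ref{theorem decomp of cohomology by W}(ii) and then verifies its hypotheses in Theorem~\ref{theorem length of W action}; you carry out the same computation directly in the cohomological setting, which is equivalent.
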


Now, Duflot's proof of her localization theorem uses results about the following sequence of homomorphisms of graded rings,  for a pair $[A,c] \in \mathcal{Q'}(G,X)$:

\begin{equation}
H^*_G(X) \xra{\textcircled{1}} H^*_G(G\cdot(c \times F^A)) \xra{\textcircled{2}} H^*_{N_G(A,c)}(c \times F^A) \xra{\textcircled{3}} H^*_{C_G(A,c)}(c \times F^A)^{W_G(A,c)} \tag{$\star$}
\end{equation}
 The sequence of facts from \cite{Duflot} that we will need concerning these maps are as follows.

\begin{lemma}(\cite{Duflot} Lemma 3.4)
For  $[A,c] \in \mathcal{Q'}(G,X)$, there is an isomorphism (the map \textcircled{2} in ($\star$) above)
$$H^*_G(G\cdot(c \times F^A)) \cong H^*_{N_G(A,c)}(c \times F^A)$$
\end{lemma}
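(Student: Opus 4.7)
The plan is to exhibit an identification
\begin{equation*}
G \cdot (c \times F^A) \cong G \times_{N_G(A,c)} (c \times F^A)
\end{equation*}
of $G$-spaces, from which the stated isomorphism follows from the standard fact $H^*_G(G \times_H Y) \cong H^*_H(Y)$ for a closed subgroup $H \leq G$ and any $H$-space $Y$ (this in turn coming from the natural $G$-homeomorphism $EG \times_G (G \times_H Y) \cong EG \times_H Y$ together with $EG$ being a contractible free $H$-space). The $G$-equivariant continuous map
\begin{equation*}
\phi : G \times_{N_G(A,c)} (c \times F^A) \lra G \cdot (c \times F^A),\qquad [g, y] \mapsto g \cdot y,
\end{equation*}
is manifestly surjective; the crux is injectivity, which reduces to the following \emph{isotropy claim}: if $g \in G$ and $g \cdot (x,f) \in c \times F^A$ for some $(x,f) \in c \times F^A$, then $g \in N_G(A,c)$.

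To prove this claim I invoke the geometric feature that makes $F$ ``magic'': for any $f = uS \in F$, the stabilizer $G_f = G \cap uSu^{-1}$ sits inside a conjugate of $S$, and hence is an elementary abelian $p$-group. Since $f \in F^A$ we have $A \leq G_f$, while $gf \in F^A$ forces $A \leq G_{gf} = gG_f g^{-1}$, i.e., $g^{-1}Ag \leq G_f$. Thus both $A$ and $g^{-1}Ag$ lie in the elementary abelian group $G_f$, and so the subgroup $B := \langle A, g^{-1}Ag \rangle$ is again elementary abelian $p$. Moreover, from $x \in c \subseteq X^A$ and $gx \in c \subseteq X^A$ we deduce $A \leq G_x$ and $g^{-1}Ag \leq G_x$, whence $B \leq G_x$. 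By the maximality of $(A,c) \in \mathcal{Q'}(G,X)$, $A$ is a maximal elementary abelian subgroup of $G_x$, which forces $B = A$, hence $g^{-1}Ag = A$ by rank. Therefore $g \in N_G(A)$, and then $gc$ is a connected component of $X^A$ meeting $c$ at $gx$, so $gc = c$ and $g \in N_G(A,c)$.

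Given the isotropy claim, injectivity of $\phi$ is automatic: if $g_1 y_1 = g_2 y_2$ with $y_i \in c \times F^A$, then $g_1^{-1} g_2 \in N_G(A,c)$ by the claim, so $[g_1, y_1] = [g_2, y_2]$ in $G \times_{N_G(A,c)} (c \times F^A)$. It remains to promote the continuous $G$-equivariant bijection $\phi$ to a homeomorphism; this is standard for compact Lie group actions, since $N_G(A,c)$ is a closed subgroup of $G$ (as $N_G(A)$ is closed because $A$ is finite, and $\{g : gc = c\}$ is closed because $c$ is a closed subspace of $X$), whence the orbit of $c \times F^A$ in $X \times F$ is a closed embedding of the twisted product. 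The main obstacle in the whole argument is the isotropy claim: it is precisely here that the specific construction of $F$ (giving elementary abelian stabilizers on each point) and the maximality of $(A,c)$ must cooperate to pin down the normalizer $N_G(A,c)$ on the nose; everything else is formal.
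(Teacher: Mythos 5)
Your proof is correct, and the paper offers no proof of its own here --- it simply cites Lemma 3.4 of \cite{Duflot}, so there is nothing in the text to compare against directly; but the route you take is precisely the one Duflot follows. The heart of the matter is the isotropy computation, and you have it right: the specific construction of $F = U/S$ gives $G_f = G \cap uSu^{-1}$ elementary abelian $p$ for every $f = uS \in F$, so $f, gf \in F^A$ puts both $A$ and $g^{-1}Ag$ inside an elementary abelian group, while $x, gx \in c$ puts the elementary abelian subgroup $\langle A, g^{-1}Ag\rangle$ inside $G_x$; maximality of $(A,c) \in \mathcal{Q}'(G,X)$ (equivalently, maximality of $A$ in $G_x$) then collapses this to $A$, forcing $g \in N_G(A)$ and hence, after the connected-component argument, $g \in N_G(A,c)$. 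Once the isotropy claim is in hand, the identification $G \cdot (c \times F^A) \cong G \times_{N_G(A,c)} (c \times F^A)$ and the change-of-groups isomorphism $H^*_G(G \times_H Y) \cong H^*_H(Y)$ finish it. Two small remarks: first, you should note that the action of $N_G(A,c)$ on $c \times F^A$ needed to form the twisted product is well-defined because $N_G(A,c)$ normalizes $A$ (hence preserves $F^A$) and preserves $c$ by definition. Second, the homeomorphism step does not actually require $X$ compact: the continuous bijection $[g,y] \mapsto (gN_G(A,c), gy)$ into $G/N_G(A,c) \times (X\times F)$ is a closed embedding ($G/N_G(A,c)$ is compact, $c \times F^A$ is closed in $X \times F$, and $G \to G/N_G(A,c)$ is a closed map), and projecting to the second factor then gives the desired homeomorphism onto $G\cdot(c\times F^A)$; so the argument holds uniformly under the paper's standing hypotheses on $X$.
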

\begin{lemma} (\cite{Duflot} Lemma 3.6)
For $(A,c) \in \mathcal{Q}(G,X)$, there is an isomorphism (the map \textcircled{3} in ($\star$) above) $$H^*_{N_G(A,c)}(c \times F^A) \cong H^*_{C_G(A,c)}(c \times F^A)^{W_G(A,c)}.$$
\end{lemma}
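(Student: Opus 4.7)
The plan is to realize the isomorphism as the edge map of a Cartan--Leray spectral sequence which collapses by virtue of Corollary~\ref{corollary W acts freely}. Write $N := N_G(A,c)$, $C := C_G(A,c)$, $W := W_G(A,c)$, $Y := c \times F^A$, and fix a contractible free $G$-CW complex $EG$. First I check that the natural projection
$$\pi\colon EG \times_C Y \longrightarrow EG \times_N Y$$
is a regular $W$-covering. Since $C$ is normal in $N$, the formula $[n] \cdot [e,y]_C := [ne, ny]_C$ (independent of the choice of lift $n$) defines a left $W$-action on the source, whose orbit space is the target. Freeness is immediate: if $[ne, ny]_C = [e, y]_C$ there exists $c \in C$ with $ce = ne$, so $c^{-1}n$ fixes $e$, and freeness of the $G$-action on $EG$ forces $n = c \in C$, i.e., $[n] = 1$.

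Next, I invoke the Cartan--Leray spectral sequence of this cover,
$$E_2^{p,q} \;=\; H^p\bigl(W;\, H^q_C(Y;k)\bigr) \;\Longrightarrow\; H^{p+q}_N(Y;k),$$
whose edge map $H^n_N(Y) \to E_2^{0,n} = H^n_C(Y)^W$ is the restriction homomorphism along $C \hookrightarrow N$ induced by $\pi$, i.e., precisely the map \textcircled{3}. By Corollary~\ref{corollary W acts freely}, each $H^q_C(Y;k)$ is free as a $k[W]$-module; since free modules over a finite group algebra are cohomologically trivial, $E_2^{p,q} = 0$ for all $p > 0$, the spectral sequence collapses at the $E_2$-page, and the edge map is an isomorphism.

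The step I expect to require the most care is matching the deck-transformation $W$-action on $H^*_C(Y)$ arising from $\pi$ with the $W$-action used in the statement, which is defined through the conjugation pair-maps $(c_n, \mu_n)\colon (C, Y) \to (C, Y)$. Chasing definitions, both actions are realized by the very same self-map of the Borel construction---namely $[e, y]_C \mapsto [ne, ny]_C$, with left multiplication by $n$ serving as a $c_n$-equivariant self-map of $EG$---so they induce identical operators on $H^*_C(Y)$. The well-definedness of the pair-map action on $W$ (rather than on $N$) is precisely the content of Lemma~\ref{lemma inner autos act trivially}. Once this matching is recorded, the collapse of the spectral sequence immediately yields the claimed isomorphism.
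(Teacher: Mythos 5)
Your argument is correct, and since the paper does not reproduce Duflot's proof of Lemma 3.6 but simply cites it, there is nothing to diverge from; the Cartan--Leray spectral sequence of the regular $W$-cover $EG\times_C(c\times F^A)\to EG\times_N(c\times F^A)$, with collapse forced by the $k[W]$-freeness supplied by Corollary~\ref{corollary W acts freely}, is precisely the route taken in \cite{Duflot}. You have also correctly flagged and resolved the one subtle point, namely that the deck-transformation action of $W$ on $H^*_{C}(c\times F^A)$ agrees with the action by conjugation pair-maps used in the statement.
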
  
Finally, one takes graded localizations at minimal primes and obtains

\begin{theorem} (\cite{Duflot} pg. 100) \label{theorem Duflot diagram}
Fix an $[A,c] \in \mathcal{Q'}(G,X)$ with corresponding minimal prime $\p_{(A,c)} \doteq \p$.  The following diagram is commutative, each of the vertical arrows are graded isomorphisms of $R_{[\p]}=H_G(X)_{[\p]}$-modules, and the first and last rows are equalizer sequences.
\begin{equation*}
\xymatrix{
   H^*_G(X)_{[\p]} \ar[r] & H^*_G(X \times F)_{[\p]} \ar[d]^{\textcircled{1}} \ar@<2pt>[r] \ar@<-2pt>[r]  & H^*_G(X \times F^2)_{[\p]} \ar[d] \\
   &H^*_G(G \cdot (c \times F^A))_{[\p]}  \ar[d]^{\textcircled{2}} \ar@<2pt>[r] \ar@<-2pt>[r]  & H^*_G( G \cdot (c \times (F^A)^2))_{[\p]}  \ar[d] \\   
   &H^*_{N_G(A,c)}(c \times F^A)_{[\p]}  \ar[d]^{\textcircled{3}} \ar@<2pt>[r] \ar@<-2pt>[r]  & H^*_{N_G(A,c)}( G \cdot (c \times (F^A)^2))_{[\p]} \ar[d] \\
   &H^*_{C_G(A,c)}(c \times F^A)_{[\p]}^{W_G(A,c)} \ar@<2pt>[r]  \ar@<-2pt>[r]  & H^*_{C_G(A,c)}( G \cdot (c \times (F^A)^2))_{[\p]}^{W_G(A,c)} \\
H^*_{C_G(A,c)}(c)_{[\p]}^{W_G(A,c)} \ar[r] & H^*_{C_G(A,c)}(c \times F)_{[\p]}^{W_G(A,c)} \ar[u]_{\textcircled{4}} \ar@<2pt>[r] \ar@<-2pt>[r]  & H^*_{C_G(A,c)}( G \cdot (c \times F^2))_{[\p]}^{W_G(A,c)} \ar[u]\\
}
\end{equation*}
\end{theorem}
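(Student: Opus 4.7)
The plan is to verify the diagram in three stages: first, establish the equalizer sequences in the first and last rows; second, identify each vertical arrow as a graded isomorphism after localization at $\p = \p_{(A,c)}$; third, check commutativity of each square. Row 1 is obtained by applying Lemma \ref{lemma Q formula} to the pair $(G,X)$ and then localizing at $\p$; since graded localization is exact, the equalizer property is preserved. Row 5 is obtained by applying Lemma \ref{lemma Q formula} to the pair $(C_G(A,c), c)$, then applying the left-exact invariants functor $(-)^{W_G(A,c)}$ (which preserves equalizer sequences, as these are defined by a kernel construction), and finally localizing at $\p$. The $W_G(A,c)$-action on each term is well-defined by the discussion preceding the theorem, since the projection maps defining the Quillen equalizer are $N_G(A,c)$-equivariant and inner automorphisms act trivially by Lemma \ref{lemma inner autos act trivially}.

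For the vertical arrows, \textcircled{2} is already an isomorphism before localization by the cited Lemma $3.4$ of \cite{Duflot}, which identifies $H^*_G(G \cdot (c \times F^A)) \cong H^*_{N_G(A,c)}(c \times F^A)$, and \textcircled{3} is already an isomorphism before localization by Lemma $3.6$ of \cite{Duflot}, which identifies $H^*_{N_G(A,c)}(c \times F^A) \cong H^*_{C_G(A,c)}(c \times F^A)^{W_G(A,c)}$; both identifications transfer to the middle column and, applied componentwise with the replacement $F^A \rightsquigarrow (F^A)^2$, to the rightmost column as well. The main obstacle is showing that the restriction maps \textcircled{1} and \textcircled{4} become isomorphisms only after localization at $\p$. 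For \textcircled{1}, induced by the closed inclusion $G \cdot (c \times F^A) \hookrightarrow X \times F$, the strategy is to apply Quillen's stratification (Theorem \ref{theorem Quillen main 2}) to the $G$-space $X \times F$: one identifies the minimal primes of $H_G(X \times F)$ lying above $\p$ with those coming from components inside $G \cdot (c \times F^A)$, exploiting the maximality of $[A,c]$ in $\mathcal{Q}'(G,X)$ and the fact that the $A$-fixed information in $F$ is concentrated in $F^A$. The complement then contributes a module whose support avoids $\p$ and so vanishes after graded localization. The argument for \textcircled{4} is structurally parallel, applied to the $C_G(A,c)$-action on $c \times F$ with closed subspace $c \times F^A$, followed by taking $W_G(A,c)$-invariants and localizing.

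Commutativity of each square is then a routine consequence of the contravariant functoriality of equivariant cohomology: each square arises from an underlying commutative square of equivariant pairs $(u,f)$ between groups and spaces, so commutativity at the level of cohomology is automatic. The only delicate step in the whole argument is the stratification analysis for \textcircled{1} and \textcircled{4}; this is where the specific geometry of Quillen's magic space $F$, together with the bijection of Theorem \ref{theorem Quillen main 2} between $\mathcal{Q}'(G,X)$ and the minimal primes of $H_G(X)$, is essentially invoked.
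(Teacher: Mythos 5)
Your outline matches the structure of the paper's discussion point for point: rows 1 and 5 are equalizers by Lemma~\ref{lemma Q formula} plus exactness of graded localization and left-exactness of the invariants functor; arrows \textcircled{2} and \textcircled{3} are isomorphisms already before localizing, by the two cited lemmas; arrows \textcircled{1} and \textcircled{4} become isomorphisms only after localizing at $\p$; and commutativity comes from contravariant functoriality applied to a diagram of equivariant pairs. That is precisely how the paper organizes its summary of this result.

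The one place you go beyond the paper is the stratification sketch for \textcircled{1} and \textcircled{4}. The paper proves nothing here --- it simply defers to Lemma~3.3 of \cite{Duflot}, ``which we have not included here.'' Your sketch is in the right spirit, but two points would need care to turn it into a proof. First, $\p$ is a prime of $R = H_G(X)$, not of $H_G(X\times F)$, so appealing to Theorem~\ref{theorem Quillen main 2} for the $G$-space $X\times F$ requires tracking how primes of $H_G(X\times F)$ contract along the projection-induced map $R\to H_G(X\times F)$; the support computation for the kernel and cokernel of the restriction map must be done as $R$-modules. Second, ``the complement then contributes a module whose support avoids $\p$'' is not literally a decomposition: $H^*_G(X\times F)$ does not split as a direct sum over a stratification, so one has to run the long exact sequence of a suitable pair (or an excision argument) and localize termwise. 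These are exactly the details buried in Duflot's Lemma~3.3, which the paper chooses not to reproduce, so the gap in your sketch is the same gap the paper leaves intentionally.
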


Note that, before localizing, all modules in the diagram are $R \doteq H_G(X)$-modules, using restriction maps.  Taking the claims of this theorem for granted, Duflot's localization result is obtained from this diagram since each square is commutative and all of the vertical arrows are isomorphisms.  The crucial vertical arrow $\textcircled{1}$ comes from a restriction map, but is only an isomorphism after graded (or ungraded) localization at $\p \subseteq H_G(X)$; the details are explained in lemma 3.3 of \cite{Duflot} (which we have not included here).  Similarly,  the bijectivity of the arrow $\textcircled{4}$, which comes from a restriction map,  is also an application of lemma 3.3 of \cite{Duflot} in a different case.

The exactness of the first and final rows of Duflot's diagram \ref{theorem Duflot diagram} come from lemma \ref{lemma Q formula}. Specifically, apply the result to the pairs $(G,X)$ and $(C_G(A,c), c)$ respectively, to get the equalizer sequences: $H^*_G(X) \rightarrow H^*_G(X \times F) \rightrightarrows H^*_G(X \times F^2)$, and $H^*_{C_G(A,c)}(c) \rightarrow H^*_{C_G(A,c)}(c \times F) \rightrightarrows H^*_{C_G(A,c)}(c \times F^2)$.  Since localization is exact, and taking invariants is left exact, we get exactness of the top and bottom rows of Duflot's diagram \ref{theorem Duflot diagram}, and conclude that $H^*_G(X)_{[\p]} \cong H^*_{C_G(A,c)}(c)_{[\p]}^{W_G(A,c)}$.

\section{The Degree of an Equivariant Cohomology Ring}
In this section we state the definition of the degree of a graded module, and state relevant results. This means that there will be two different usages of ``degree":  the degree of a homogeneous element in a graded ring, and the degree of the ring itself.  The usages are clearly distinguished by context.

 Suppose $L$ is a  $\mathbb{Z}$-graded Noetherian ring;  we denote the category of finitely-generated graded $L$-modules  by $\grmod(L)$. 
 
 In this paper, all of our graded rings will be of one of two types:
 \begin{itemize}
 \item[I.] A positively graded Noetherian ring $R$ ($R_i  = 0$ for $i <0$), with $R_0$ an Artinian ring that is a finitely generated $k$-algebra:  thus $R_0$ must be finite dimensional as a vector space over $k$.
 \item[II.] A graded localization of a positively graded ring of  type I at a graded prime ideal.
 \end{itemize}

We use an asterisk to denote a usual definition in non-graded algebra,  adapted for the graded category; this is fairly standard notation--e.g., it is used in \cite{brhe}. For example, if $M \in \grmod(L)$ is a graded $L$-module, we denote its graded Krull dimension by $*dim_L(M)$, i.e. this is the longest chain of graded prime ideals in $L$ containing the $L$-annihilator of $M$. For the type of rings that interest us, it turns out that  graded and ungraded measures  are directly comparable; for some measures this is well-known, in other cases we will refer to \cite{Blumstein} for discussion and proofs. We choose to use the *-notation in any case, as it is often practical to work only with homogeneous elements.  

From now on, we will use $R$  to denote a general ring of type I.   

For $M \in \grmod(R)$, the Poincar\'{e} series of $M$ may be defined by $P_M(t)=\sum_i \vdim_k(M_i)t^i$, where $\vdim_k(V)$ is the vector space dimension of the vector space $V$ over $k$.  To see this, if  $M \in \grmod(R)$, then, for every $j$,  $M_j$ is an Artinian $R_0$-module (since $M_j$ is a finitely generated $R_0$-module), and  $ \ell_{R_0}(M_j) = \vdim_k(M_j)$.  This follows since, for every maximal ideal $\m_0$ of $R_0$, $R_0/\m_0$ is isomorphic to the field $k$.  Therefore,
$$P_M(t) =\sum_i \vdim_k(M_i)t^i =\sum_i \ell_{R_0}(M_i)t^i.$$

Recall that, since $R$ is of type I,  $*dim_R(M) = dim_R(M)$ and $*dim_R(M)$ is the order of the pole of $P_M(t)$ at $t=1$ (\cite{Blumstein} contains an account of this, but these facts are well-known and hold for any positively graded Noetherian ring whose elements in degree zero form an Artinian ring).

\begin{definition}  If $R$ is of type I, $M\in \grmod(R), \newline M \neq 0$ and $D(M) = \sdim_R(M)$, then
$$\deg_R(M) \doteq \lim_{t \rightarrow 1} (1-t)^{D(M)}P_M(t)$$ is a well-defined, strictly positive, rational number.  For convenience, define $\deg_R(0) = 0.$
\end{definition}

Given $M \in \grmod(R)$,  $M \neq 0$,
we can read off the degree of a module directly from the Poincar\'{e} series if we expand it as a Laurent series about $t=1$: $$P_R(t) = \frac{\deg(M)}{(1-t)^{D(M)}} + \text{``higher order terms"}.$$

The degree of $M$ can be regarded as a species of non-integer multiplicity associated to $M$.  As noted in the Introduction, this rational number is called $c(M)$ in \cite{ma}; as far as we know, this is the first reference that discusses this numerical invariant for equivariant cohomology.  However, we use the terminology ``degree", as used in \cite{Bensonea}.  The reference \cite{Vasconcelos} uses ``degree" to refer to the integer invariant of local algebra also called ``multiplicity".  The first author's preprint \cite{Blumstein} exposits the concept of a ``Samuel multiplicity"  associated to certain ideals in a graded ring, and makes precise the relationship between the degree and this Samuel multiplicity.  We don't make this more precise here, because making all the definitions necessary to state the relationship would add to the length of this paper and is not used anywhere here; instead we refer the reader to \cite{Blumstein}.

We may make the following definitions:
\begin{definition} Let $L$ be a $\mathbb{Z}$-graded ring.
\begin{itemize}

\item  A nonzero graded $L$-module $M$ is  a *simple $L$-module if and only if the only graded $L$-submodules of $M$ are $0$ and $M$. 
\item A *composition series for a graded $L$-module $M$ is a sequence of graded submodules of $M$ of the form 
$$0 = M_0 \subset M_1 \subset \cdots \subset M_n = M$$ such that for each $i$, $M_{i+1}/M_i$ is a *simple $L$-module.  The *length of this *composition series is defined to be $n$.
\end{itemize}
\end{definition} 

The expected properties of *composition series hold and we may talk about the *length of an $L$-module $M$ (denoted $*\ell_L(M)$) . One can see that $*\ell_L$ adds over short exact sequences of graded $L$-modules; another straightforward lemma is:
\begin{lemma} \label{length of a tensor product}
Let $L$ be a $\Z$-graded  ring with  $L_0$ is a finite dimensional vector space over a field $k$. (This implies that $L_0$ is an Artinian ring.)  Suppose that  $M$ is a graded $L$-module and $V$ is a graded finite dimensional vector space over $k$, consider $M \otimes_k V$ as a graded $L$-module defined by $r \cdot (m \otimes v) \doteq (r \cdot m) \otimes v$ for $r \in L$ (with the usual grading on the tensor product).  If $*\ell_L(M)$ is finite, so is $*\ell_L(M  \otimes_k V)$ and $*\ell_L(M \otimes_k V) = *\ell_L(M) t$, where $t$ is the (total) dimension of $V$ as a vector space over $k$. 
\end{lemma}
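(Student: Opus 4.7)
The plan is to reduce to the case $V = k$ concentrated in a single degree by choosing a homogeneous basis, and then invoke the additivity of $*\ell_L$ over direct sums together with its invariance under degree shifts.

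First, since $V$ is a graded finite-dimensional $k$-vector space, $V = \bigoplus_j V_j$ with each $V_j$ finite-dimensional, so $V$ admits a homogeneous $k$-basis $v_1, \ldots, v_t$; let $d_i = \deg(v_i)$. As graded $L$-modules, one has the decomposition
$$M \otimes_k V \;=\; \bigoplus_{i=1}^t M \otimes_k (k \cdot v_i),$$
where each summand $M \otimes_k (k \cdot v_i)$ is isomorphic, via $m \otimes v_i \mapsto m$, to the shifted graded $L$-module $M(d_i)$ defined by $M(d_i)_j = M_{j - d_i}$; the $L$-action transports correctly because the action on $M \otimes_k V$ is given by $r \cdot (m \otimes v) = (rm) \otimes v$, which leaves the second tensor factor untouched.

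Next, I would observe that $*\ell_L$ is invariant under degree shifts: a filtration $0 = N_0 \subset N_1 \subset \cdots \subset N_n = M$ by graded submodules of $M$ with $*$simple successive quotients induces a filtration $0 = N_0(d) \subset \cdots \subset N_n(d) = M(d)$ of $M(d)$ with the same successive quotients up to shift, and shifting preserves $*$simplicity (the graded submodule lattice is unchanged). Hence $*\ell_L(M(d_i)) = *\ell_L(M)$ for every $i$. Combining this with the additivity of $*\ell_L$ over direct sums (itself an immediate consequence of additivity over short exact sequences, applied to the split sequence $0 \to N_1 \to N_1 \oplus N_2 \to N_2 \to 0$), we obtain
$$*\ell_L(M \otimes_k V) \;=\; \sum_{i=1}^t *\ell_L(M(d_i)) \;=\; t \cdot *\ell_L(M),$$
which in particular is finite when $*\ell_L(M)$ is.

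There is no serious obstacle here; the one subtlety is confirming the existence of a homogeneous basis of $V$, which is immediate from the hypothesis that $V$ is a graded finite-dimensional vector space, and being careful that $L$ acts only on the first tensor factor so that the isomorphism with the shift $M(d_i)$ is genuinely an isomorphism of graded $L$-modules rather than merely of graded $k$-vector spaces.
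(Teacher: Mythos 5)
Your proof is correct. The paper does not actually supply an argument for this lemma---it is dismissed as ``straightforward'' immediately after the remark that $*\ell_L$ is additive over short exact sequences---so there is no alternative route to compare against. The decomposition $M \otimes_k V \cong \bigoplus_{i=1}^t M(d_i)$ via a homogeneous basis of $V$, together with the observations that $*\ell_L$ is additive over direct sums and invariant under degree shift (since shifting induces an isomorphism of graded-submodule lattices and hence preserves $*$simplicity and $*$composition series), is exactly the argument the authors had in mind, and you have been appropriately careful about the point that $L$ acts only on the left tensor factor, which is what makes $m \otimes v_i \mapsto m$ an $L$-linear isomorphism and not merely $k$-linear.
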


For comparisons to ungraded concepts and further discussion, we refer to \cite{Blumstein}; to give some context to results proved here, a sample fact from \cite{Blumstein} is
\begin{lemma}\label{finite length lemma}  (Theorem 4.10 of \cite{Blumstein})  Let $R$ be of type I, and let $\p$ be a minimal prime for $M \in \grmod(R)$.  Then, $\p$ is graded and $*\ell_{R_{[\p]}}(M_{[\p]}) = \ell_{R_{\p}}(M_{\p}) < \infty$.
\end{lemma}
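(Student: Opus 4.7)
The plan is to produce a *composition series of $M_{[\p]}$ whose successive quotients are all (shifts of) $R_{[\p]}/\p R_{[\p]}$, and then to show that ordinary localization at $\p$ converts this into an ordinary composition series for $M_\p$ of the same length, each factor becoming the residue field $k(\p)$. Everything will hinge on classifying the *simple $R_{[\p]}$-modules and understanding their image under the further localization $R_{[\p]}\to R_\p$.

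First I would verify that $\p$ is graded. Since $M\in\grmod(R)$, the annihilator $\mathrm{Ann}_R(M)$ is a graded ideal; in a Noetherian graded ring every minimal (in fact, every associated) prime over a graded ideal is graded, so $\p$ is graded. Next I would show $\ssd\ell_{R_{[\p]}}(M_{[\p]})<\infty$. Minimality of $\p$ in $\mathrm{Supp}(M)$ forces $\p R_{[\p]}$ to be the unique graded prime containing $\mathrm{Ann}_{R_{[\p]}}(M_{[\p]})$, so by Noetherianness some power $(\p R_{[\p]})^n$ annihilates $M_{[\p]}$. Filter by $(\p R_{[\p]})^i M_{[\p]}$; each successive quotient is a finitely generated graded module over $R_{[\p]}/\p R_{[\p]}$, and the latter is a ``graded field'' because every homogeneous element of $R\setminus\p$ has been inverted. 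A finitely generated graded module over a graded field is a finite-dimensional graded vector space, hence of finite *length, and adding these up gives finiteness of $\ssd\ell_{R_{[\p]}}(M_{[\p]})$.

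The key step is to classify *simple $R_{[\p]}$-modules: any such module $S$ is cyclic on any nonzero homogeneous element $x$, and its graded annihilator is maximal among proper graded ideals of $R_{[\p]}$, which forces it to equal the unique graded maximal ideal $\p R_{[\p]}$. Hence every *simple factor in a *composition series
\[
0 = M_0 \subset M_1 \subset \cdots \subset M_n = M_{[\p]}
\]
is isomorphic to $R_{[\p]}/\p R_{[\p]}(d_i)$ for some shift $d_i$. Now apply the exact functor of ordinary localization at $\tilde{S}$, the image of $R\setminus\p$ in $R_{[\p]}$, obtaining $R_\p=\tilde{S}^{-1}R_{[\p]}$. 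Each factor becomes
\[
\bigl(R_{[\p]}/\p R_{[\p]}(d_i)\bigr)_\p \;\cong\; R_\p/\p R_\p \;=\; k(\p),
\]
which is simple and nonzero as an $R_\p$-module. Exactness then gives a strictly increasing filtration $0=(M_0)_\p\subset\cdots\subset(M_n)_\p=M_\p$ with simple quotients, i.e.\ an ordinary composition series of length $n$, so $\ell_{R_\p}(M_\p)=n=\ssd\ell_{R_{[\p]}}(M_{[\p]})$.

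The step I expect to require the most care is the identification $\bigl(R_{[\p]}/\p R_{[\p]}\bigr)_\p \cong k(\p)$, and more precisely the assertion that the degree shift disappears upon forgetting the grading and further inverting the nonhomogeneous elements of $R\setminus\p$. Heuristically the graded localization has already inverted the homogeneous complement of $\p$, so ordinary localization supplies inverses for the remaining inhomogeneous elements, collapsing the graded field $R_{[\p]}/\p R_{[\p]}$ to the ordinary residue field; checking this from the universal property of localization, and checking that nothing degenerates to zero, are the only nontrivial verifications in the argument.
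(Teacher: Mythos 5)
The paper does not prove this lemma; it is imported from \cite{Blumstein} (Theorem 4.10), so there is no in-paper argument to compare against. Judged on its own, your proof is correct and self-contained, and it follows what I would expect to be the natural argument.

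A few points of detail worth tightening. Your finiteness step asserts that a finitely generated graded module over the graded field $K := R_{[\p]}/\p R_{[\p]}$ is ``a finite-dimensional graded vector space''; strictly speaking $K$ need not be concentrated in degree $0$ (it can be of the form $K_0[t,t^{-1}]$ with $t$ of positive degree), so the correct statement is that such a module is a finitely generated \emph{free} graded $K$-module, and since $K$ itself is $*$simple (its only graded ideals are $0$ and $K$), this still gives finite $*$length; the conclusion is unaffected. Your radical argument is also fine but deserves one explicit word: the radical of the graded ideal $\mathrm{Ann}_{R_{[\p]}}(M_{[\p]})$ equals the intersection of the minimal primes over it, which are all graded (minimal primes over a graded ideal in a Noetherian graded ring are graded), and the only graded prime of $R_{[\p]}$ over that annihilator is $\p R_{[\p]}$ because graded primes of $R_{[\p]}$ pull back to graded primes of $R$ lying between $\mathrm{Ann}_R(M)$ and $\p$, and $\p$ is minimal over $\mathrm{Ann}_R(M)$. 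Finally, the step you flag as delicate --- that $\bigl(R_{[\p]}/\p R_{[\p]}\bigr)_\p \cong k(\p) \neq 0$ --- is clean once you note that $R_{[\p]}/\p R_{[\p]}$ is the localization of the domain $R/\p$ at its nonzero homogeneous elements and $k(\p)$ is its localization at all nonzero elements, so the passage is a further localization of a domain at a multiplicative set of nonzero elements, hence injective and nonzero. With those clarifications the chain of isomorphisms, the classification of $*$simple $R_{[\p]}$-modules via the unique $*$maximal graded ideal $\p R_{[\p]}$, and the exactness of ordinary localization together deliver both $*\ell_{R_{[\p]}}(M_{[\p]})<\infty$ and the equality $*\ell_{R_{[\p]}}(M_{[\p]})=\ell_{R_\p}(M_\p)$ exactly as claimed.
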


The following theorem gives an expected additivity formula for degree, where the sum is indexed by the minimal primes of maximal dimension.  This theorem just states the behavior expected of some multiplicity-type numerical invariants, and its proof is similar  to  corresponding statements for other sorts of multiplicities.  For example, see \cite{se} for what Serre  calls Samuel multiplicity for local rings; \cite{Vasconcelos} gives an account of some other types of multiplicity. The main theorem of this paper (\ref{theorem main thm geometric degree}) places this ``algebraic" additivity formula in a geometric context when applied to equivariant cohomology rings. 
 
\begin{theorem} (see, e.g., \cite{Blumstein} Theorem 6.7) \label{theorem sum formula for degree and multiplicity}  Suppose $R$ is of type I.
Let $M \in \grmod(R)$,  and $\mathcal{D}(M)$ be defined as the set of prime ideals $\p$ in $R$, necessarily minimal primes for $M$ and graded, such that $\sdim_R(R/\p) = \sdim_R(M)$.  Then, $$\deg(M) = \sum_{\p \in \mathcal{D}(M)}*\ell_{R_{[\p]}}(M_{[\p]})\cdot \deg(R/\p).$$
\end{theorem}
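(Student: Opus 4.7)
The plan is to build a graded prime filtration of $M$, combine the additivity of the Poincar\'e series over the resulting short exact sequences with the pole-order calculation, and then use graded localization at each $\p \in \mathcal{D}(M)$ to convert the combinatorial count of prime layers into a $*$length.

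First, I would produce a finite chain
\[
0 = M_0 \subset M_1 \subset \cdots \subset M_n = M
\]
of graded submodules whose successive quotients are shifted graded cyclic prime modules, $M_i/M_{i-1} \cong (R/\p_i)(d_i)$, where each $\p_i$ is a graded prime of $R$ containing $\mathrm{ann}(M)$ and $d_i \in \Z$. The usual inductive construction transfers verbatim to the graded setting: in each $M/M_{i-1}$, pick a homogeneous element whose annihilator is maximal among annihilators of homogeneous elements; such an annihilator is automatically a graded prime $\p_i$, and the cyclic submodule it generates is isomorphic to a shift of $R/\p_i$. Noetherianity of $M$ terminates the chain.

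Second, because $P_{\bullet}(t)$ is additive on short exact sequences of graded $R$-modules and $P_{N(d)}(t) = t^{-d}P_N(t)$, the filtration yields
\[
P_M(t) = \sum_{i=1}^{n} t^{-d_i}\, P_{R/\p_i}(t).
\]
The pole order of $P_{R/\p_i}(t)$ at $t=1$ equals $\sdim_R(R/\p_i)$, which is at most $D(M)=\sdim_R(M)$ with equality iff $\p_i \in \mathcal{D}(M)$. Multiplying by $(1-t)^{D(M)}$ and sending $t\to 1$ therefore annihilates every summand whose $\p_i \notin \mathcal{D}(M)$, while each surviving $t^{-d_i}$ tends to $1$. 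Setting $m(\p) \doteq \#\{i : \p_i = \p\}$, we obtain
\[
\deg(M) = \sum_{\p \in \mathcal{D}(M)} m(\p)\, \deg(R/\p).
\]

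Third, it remains to identify $m(\p) = *\ell_{R_{[\p]}}(M_{[\p]})$ for each $\p \in \mathcal{D}(M)$. Graded localization at $\p$ is exact, so the prime filtration of $M$ descends to a filtration of $M_{[\p]}$ with layers $(R/\p_i)(d_i)_{[\p]}$. A layer with $\p_i \not\subseteq \p$ localizes to $0$, and a layer with $\p_i \subsetneq \p$ cannot occur because $\mathrm{ann}(M)\subseteq \p_i \subsetneq \p$ would contradict the minimality of $\p$ over $\mathrm{ann}(M)$ guaranteed by $\p \in \mathcal{D}(M)$. Thus the surviving layers are precisely those with $\p_i = \p$; each one is a shift of $(R/\p)_{[\p]}$, which is a graded field and hence $*$simple over $R_{[\p]}$ of $*$length $1$. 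Combining these, $*\ell_{R_{[\p]}}(M_{[\p]}) = m(\p)$, and finiteness of this length is supplied by Lemma \ref{finite length lemma}.

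The main obstacle I anticipate is the graded bookkeeping in the middle step: one must check that the shift factors $t^{-d_i}$ do not disturb the $t \to 1$ limit and that only layers with $\sdim_R(R/\p_i) = D(M)$ survive in the leading coefficient. Once this verification is done carefully, the localization argument in the third step is formal, and the identification $m(\p) = *\ell_{R_{[\p]}}(M_{[\p]})$ falls out of the $*$simplicity of $(R/\p)_{[\p]}$.
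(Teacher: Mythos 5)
Your argument is correct, and it is the standard d\'evissage that the paper implicitly appeals to when it points the reader to \cite{Blumstein} and remarks that the proof ``is similar to corresponding statements for other sorts of multiplicities'' (cf.\ Serre's treatment of Samuel multiplicity). The three ingredients you use --- a graded prime filtration with layers $(R/\p_i)(d_i)$, additivity of Poincar\'e series together with the fact that $\sdim_R(R/\p_i)\le D(M)$ so that $(1-t)^{D(M)}$ kills every layer of lower dimension and leaves $\deg(R/\p_i)$ otherwise, and exactness of graded localization identifying the number of surviving layers with $*\ell_{R_{[\p]}}(M_{[\p]})$ via $*$simplicity of the graded field $(R/\p)_{[\p]}$ --- are exactly the steps one expects, and the points you flag (the shifts $t^{-d_i}\to 1$, the minimality argument ruling out $\p_i\subsetneq\p$) are handled correctly. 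In short: this is essentially the same approach as the cited reference, carried out accurately.
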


\subsection{Degree Decomposition of Lynn}
This short section serves simply to give a snapshot of Lynn's results (\cite{Lynn}). Her main result is a geometric additivity formula  for the degree of an equivariant cohomology ring in the case where $X$ is taken to be a point.  This work was the inspiration for our more general additivity formula (Theorem \ref{theorem main thm geometric degree}); however, we do not use any of Lynn's results in this paper.

As noted in the introduction, Lynn's methods rely on the smoothness of the $G$-space $X$.  In order to prove her main theorem, she uses Gysin-type sequences to prove
\begin{theorem}(\cite{Lynn} Theorem 4.21) \label{theorem Becky 4.21}
Let $G$ be a compact Lie group, and let $X$ be a smooth, compact $G$-manifold.  Let $Z= \cup_{i=1}^n Z_i$, where the $Z_i$'s are closed, $G$-invariant, disjoint submanifolds of $X$ such that $\nu_{Z_i}$ (the normal bundle) is orientable for all $i$.  Assume that $\dim(H^*_G(X))=\dim(H^*_G(Z_i)$ for all $i$, and if $z \not \in Z$, then $\dim(H^*_{G_z}) < \dim(H^*_G(Z_i))$ for all $i$. Then, $$\deg(H^*_G(X)) = \sum_{i=1}^n deg(H^*_G(Z_i)).$$
\end{theorem}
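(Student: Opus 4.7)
The plan is to use the equivariant long exact sequence of the pair $(X, X \setminus Z)$ together with the equivariant Thom isomorphism for the normal bundles $\nu_{Z_i}$, and then read off the degree from the leading behavior of the Poincaré series at $t=1$. Set $D := \dim H^*_G(X)$.

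I would first control the complement. For any $z \in X \setminus Z$, an elementary abelian $p$-subgroup $A$ fixing $z$ satisfies $A \subseteq G_z$, so $\mathrm{rank}(A) \le \dim H^*_{G_z} < D$ by hypothesis. Quillen's theorem (\ref{theorem Quillen main 1}) applied to $X \setminus Z$ then yields $\dim H^*_G(X \setminus Z) < D$. Next, since the $Z_i$ are closed, disjoint, $G$-invariant submanifolds with orientable normal bundles of codimensions $d_i$, equivariant excision on a family of disjoint equivariant tubular neighborhoods, together with the equivariant Thom isomorphism, gives
$$H^j_G(X, X \setminus Z) \;\cong\; \bigoplus_{i=1}^n H^{j - d_i}_G(Z_i).$$
Writing $A := H^*_G(X, X \setminus Z)$, $B := H^*_G(X)$, and $C := H^*_G(X \setminus Z)$, this says $P_A(t) = \sum_i t^{d_i} P_{H^*_G(Z_i)}(t)$. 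Since degree is unaffected by graded shifts and each $\dim H^*_G(Z_i) = D$ by hypothesis, we obtain $\dim A = D$ and $\deg(A) = \sum_i \deg H^*_G(Z_i)$.

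The heart of the argument is the long exact sequence of $H^*_G(X)$-modules
$$\cdots \longrightarrow A^j \longrightarrow B^j \longrightarrow C^j \stackrel{\delta^j}{\longrightarrow} A^{j+1} \longrightarrow \cdots ,$$
together with the submodule $I := \ker(A \to B) = \mathrm{image}(\delta) \subseteq A$. Splitting this long exact sequence into short exact sequences in the standard way produces an identity of Poincaré series that expresses $P_A - P_B + P_C$ in terms of $P_I$ (up to a constant correction coming from $I^0$). Because $\delta$ is $H^*_G(X)$-linear and surjects onto a shift of $I$, the module $I[1]$ is a graded quotient of $C$, so $\dim I \le \dim C < D$. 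Hence $P_C$ and $P_I$ contribute poles at $t=1$ of order strictly less than $D$. Multiplying through by $(1-t)^D$ and letting $t \to 1$ annihilates all but the top-order terms, leaving $\deg B = \deg A = \sum_i \deg H^*_G(Z_i)$, as required.

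The most delicate point, and the reason Lynn's methods are restricted to smooth manifolds, is the equivariant Thom isomorphism: establishing a mod-$p$ Thom class for each $\nu_{Z_i}$ under the stated orientability hypothesis for a compact Lie group action, and verifying that the resulting isomorphism is $H^*_G(X)$-linear so that the Krull dimensions appearing in the long exact sequence are computed consistently. Quillen's finite generation statement (Theorem \ref{theorem Quillen finiteness 2}) ensures that the Krull dimension of $H^*_G(Z_i)$ does not depend on whether one regards it as a module over $H^*_G(X)$ or over itself, so no ambiguity arises in passing between module structures.
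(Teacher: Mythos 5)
The paper does not actually prove this theorem: it is quoted verbatim from Lynn (her Theorem 4.21) purely as background, and the authors state explicitly that they do not use any of Lynn's results. So there is no ``paper's own proof'' to compare against; what the paper tells us is only that Lynn's argument proceeds via Gysin-type sequences. Your reconstruction --- excision onto disjoint equivariant tubular neighborhoods, the equivariant Thom isomorphism identifying $H^*_G(X, X\setminus Z)$ with $\bigoplus_i H^{*-d_i}_G(Z_i)$, and then the long exact sequence of the pair --- is precisely the Gysin-sequence approach, and your Poincar\'e-series bookkeeping is sound: the key observation that $\dim_R I \le \dim_R C < D$, because the connecting homomorphism $\delta$ is $H^*_G(X)$-linear and surjects onto a degree shift of $I$, is exactly what kills the error terms when you multiply by $(1-t)^D$ and let $t\to 1$. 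Two points are worth spelling out to make the sketch airtight. First, to invoke Quillen's finiteness (Theorem \ref{theorem Quillen finiteness 2}) for $C = H^*_G(X\setminus Z)$ you should note that $X\setminus Z$ is $G$-equivariantly homotopy equivalent to the compact manifold-with-boundary obtained by deleting an open tubular neighborhood of $Z$, so $H^*(X\setminus Z)$ is finite-dimensional and $C$ is a finitely generated $H_G(X)$-module with a well-defined Poincar\'e series. Second, for $p$ odd, orientability of $\nu_{Z_i}$ alone does not give orientability of the Borel bundle $EG\times_G \nu_{Z_i}$ unless $G$ acts orientation-preservingly (automatic when $G$ is connected, vacuous when $p=2$); this is the precise hypothesis the equivariant Thom class needs, and it is the reason you are right to single out the Thom isomorphism as the delicate step tying Lynn's method to the smooth category.
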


Embedding $G$ in a unitary group $U$, and taking $F = U/S$ as usual, Lynn shows that for any $[A,c] \in \mathcal{Q'}_{max}(G,X)$ the sub-space $G \cdot (c \times F^A)$ satisfies the hypotheses required of  $Z_i$ in the above theorem, so deduces:
\begin{corollary}
For $X$ a compact, smooth manifold with $G$ a compact Lie group acting smoothly on $X$, $\deg(H^*_G(X)) = \sum_{[A,c] \in \mathcal{Q'}_{max}(G,X)} \deg(H^*_G(G\cdot (c \times F^A))$
\end{corollary}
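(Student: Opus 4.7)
The plan is to apply Lynn's Theorem \ref{theorem Becky 4.21} to the smooth compact $G$-manifold $X \times F$, taking $Z_{[A,c]} \doteq G \cdot (c \times F^A)$ for $[A,c] \in \mathcal{Q'}_{max}(G,X)$ and $Z \doteq \bigcup_{[A,c]} Z_{[A,c]}$. The task is to verify the hypotheses of that theorem for this choice of $Z$: each $Z_{[A,c]}$ is a closed $G$-invariant submanifold of $X \times F$ with orientable normal bundle, the pieces are pairwise disjoint, and the two Krull-dimension conditions hold. Theorem \ref{theorem Becky 4.21} then produces the desired additivity formula, after the standard identification of $\deg(H^*_G(X))$ with $\deg(H^*_G(X \times F))$ via Lemma \ref{lemma Q formula}, which is absorbed into Lynn's setup.

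For the submanifold structure, $X \times F$ is a smooth $G$-manifold, so $(X \times F)^A = X^A \times F^A$ is a closed smooth submanifold by the slice theorem for compact Lie group actions, and $c \times F^A$ is a union of its connected components. The orbit $G \cdot (c \times F^A)$ is the image of the proper smooth map $G \times_{N_G(A,c)} (c \times F^A) \to X \times F$ from a compact manifold, hence a closed $G$-invariant submanifold. Orientability of its normal bundle comes from the natural complex structure on $F = U/S$ inherited from $U$, which makes the normal bundle of $c \times F^A$ a complex $A$-representation bundle; this orientation is $G$-equivariant and descends to $G \cdot (c \times F^A)$.

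For disjointness, suppose $Z_{[A_i,c_i]} \cap Z_{[A_j,c_j]} \neq \emptyset$ with $[A_i,c_i] \neq [A_j,c_j]$. Translating a common point by suitable group elements would yield a point of $c_i$ fixed both by $A_i$ and by some conjugate of $A_j$, and combined with the component condition this produces morphisms between $(A_i,c_i)$ and $(A_j,c_j)$ in $\mathcal{Q}(G,X)$, contradicting maximality of both pairs. For the Krull dimensions, Quillen's Theorem \ref{theorem Quillen main 1} identifies $\dim H_G(Y)$ with the maximum rank of an elementary abelian $p$-subgroup with nonempty fixed set in $Y$; applying this to $Y = X \times F$ (where $F^A \neq \emptyset$ for every such $A$) and to $Y = Z_{[A_i,c_i]}$ yields $\dim H_G(X \times F) = \dim H_G(Z_{[A_i,c_i]}) = \mathrm{rk}(A_i)$ by definition of $\mathcal{Q'}_{max}$. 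For $z \notin Z$, any elementary abelian $B \leq G_z$ of maximum rank would produce a maximal pair $(B,c_B)$ placing $z$ in $G \cdot (c_B \times F^B) \subseteq Z$, a contradiction, so $\dim H_{G_z} < \dim H_G(Z_{[A_i,c_i]})$.

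The main obstacle is the differential-topological bookkeeping --- smoothness of the orbit $G \cdot (c \times F^A)$, the proper-embedding structure, and orientability of its normal bundle via the complex structure on $F$ --- all of which depend essentially on Lynn's smoothness hypothesis on $X$. This dependency on the smooth category is exactly what the present paper seeks to remove via its commutative-algebraic approach.
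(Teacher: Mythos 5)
The paper itself does not prove this corollary: it merely cites Lynn's paper, asserting that Lynn verifies the hypotheses of Theorem~\ref{theorem Becky 4.21} for $Z_i = G\cdot(c\times F^A)$ inside $X \times F$, and the paper explicitly states that none of Lynn's results are used elsewhere in the argument. Your reconstruction of the verification---closedness and $G$-invariance of the orbit via the proper map $G\times_{N_G(A,c)}(c\times F^A)\to X\times F$, orientability of the normal bundle via the complex structure on $F$, disjointness from maximality of the pairs, and the two Krull-dimension conditions via Theorem~\ref{theorem Quillen main 1}---is a plausible sketch of what Lynn does and is in line with the paper's one-sentence attribution, even if you leave the differential-topological details unchecked.

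However, there is a genuine gap in your claim that $\deg(H^*_G(X))$ is ``identified'' with $\deg(H^*_G(X\times F))$ via Lemma~\ref{lemma Q formula} and absorbed into the setup. This identification is false. Since $H^*_G(X\times F)\cong H^*_G(X)\otimes_k H^*(F)$ with the graded tensor product, the Poincar\'e series multiply, while the Krull dimension is unchanged (because $F^A\neq\emptyset$ for every elementary abelian $A\leq G$); hence
\[
\deg\bigl(H^*_G(X\times F)\bigr)=m\cdot\deg\bigl(H^*_G(X)\bigr),\qquad m=\dim_k H^*(F)>1.
\]
This factor $m$ is exactly the quantity that the paper keeps explicit (and cancels) in steps (1) and (5) of the proof of Theorem~\ref{theorem length G to C}; it does not disappear merely by quoting the equalizer sequence. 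Applying Theorem~\ref{theorem Becky 4.21} to the ambient manifold $X\times F$ gives $\deg(H^*_G(X\times F))=\sum\deg(H^*_G(G\cdot(c\times F^A)))$, so passing to $\deg(H^*_G(X))$ would introduce a factor of $1/m$ on the right-hand side, not leave it unchanged. To make your argument go through one would need a compensating factor of $m$ inside $\deg(H^*_G(G\cdot(c\times F^A)))$ from Lynn's explicit computations, or else one should read the corollary (which is purely expository here) as loosely stated with $X\times F$ intended on the left-hand side. Either way, the ``standard identification'' you invoke is not available and the step as written is incorrect.
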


Lynn argues by descent, which simply means using \ref{lemma Q formula}, and taking $X$ to be a point, to derive the following additivity formula for degree.

\begin{theorem}\cite{Lynn}
Let $G$ be a compact Lie group, and let $\mathcal{Q'}_{max}(G)$ be the set of conjugacy classes of maximal rank elementary abelian $p$-groups of $G$. Then,
$$\deg(H^*_G) = \sum_{[A] \in \mathcal{Q'}_{max}(G)}\frac{1}{|W_G(A)|} \deg(H^*_{C_G(A)}).$$
\end{theorem}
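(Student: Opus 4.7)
The plan is to specialize the paper's general strategy to the case $X=\{pt\}$: apply the algebraic additivity formula (Theorem~\ref{theorem sum formula for degree and multiplicity}) twice, first to $H_G$ and then to each $H_{C_G(A)}$, and identify the resulting summands using Duflot's localization (Theorem~\ref{theorem Duflot localization}) together with the free-$k[W_G(A)]$-module structure of Corollary~\ref{corollary W acts freely}. For technical convenience I work with the commutative even-degree ring $H_G$; the analogous statement for $H^*_G$ follows by a standard comparison of Poincar\'e series.

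First, applying Theorem~\ref{theorem sum formula for degree and multiplicity} with $R=M=H_G$ and using Quillen's theorems (\ref{theorem Quillen main 1} and~\ref{theorem Quillen main 2}) to identify the minimal primes of maximal $*$-dimension with $\mathcal{Q'}_{max}(G)$ yields
\begin{equation*}
\deg(H_G) = \sum_{[A] \in \mathcal{Q'}_{max}(G)} *\ell_{(H_G)_{[\p_A]}}\!\bigl((H_G)_{[\p_A]}\bigr) \cdot \deg(H_G/\p_A).
\end{equation*}
Second, applying the same formula to each $H_{C_G(A)}$: because $A$ is central in $C_G(A)$ and of maximal rank in $G$, any elementary abelian subgroup of $C_G(A)$ of rank $\mathrm{rk}(A)$ must equal $A$. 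Thus $\mathcal{D}(H_{C_G(A)})=\{\q_A\}$, where $\q_A$ is the Quillen prime in $H_{C_G(A)}$ associated to $(A,pt)$, and the sum collapses to the single term
\begin{equation*}
\deg(H_{C_G(A)}) = *\ell_{(H_{C_G(A)})_{[\q_A]}}\!\bigl((H_{C_G(A)})_{[\q_A]}\bigr) \cdot \deg(H_{C_G(A)}/\q_A).
\end{equation*}

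It then remains to establish, for each $[A]\in\mathcal{Q'}_{max}(G)$, the term-by-term identity
\begin{equation*}
*\ell_{(H_G)_{[\p_A]}}\!\bigl((H_G)_{[\p_A]}\bigr) \cdot \deg(H_G/\p_A) \;=\; \frac{1}{|W_G(A)|}\,\deg(H_{C_G(A)}).
\end{equation*}
Theorem~\ref{theorem Duflot localization} supplies the identification $(H_G)_{[\p_A]} \cong (H_{C_G(A)})^{W_G(A)}_{[\p_A]}$, translating the left-hand localization into a $W_G(A)$-invariant subring. The factor $1/|W_G(A)|$ is to be extracted from Corollary~\ref{corollary W acts freely}: since $H^q_{C_G(A)}(F^A)$ is a free $k[W_G(A)]$-module in every degree $q$, and any free $k[W]$-module $M$ satisfies $\dim_k M = |W|\dim_k M^W$, the Poincar\'e series of $H^*_{C_G(A)}(F^A)$ is $|W_G(A)|$ times that of its invariants. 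Pushing this Poincar\'e-series relation through Quillen's equalizer (Lemma~\ref{lemma Q formula}) and Duflot's commutative diagram (Theorem~\ref{theorem Duflot diagram}) should then promote it to the desired equality of lengths and degrees after localization at $\p_A$.

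The main obstacle will be this term-by-term comparison: converting the freeness over $k[W_G(A)]$, which is available only on the auxiliary rings $H^*_{C_G(A)}(F^A)$ rather than on $H^*_{C_G(A)}$ itself, into a clean identity with the precise factor $|W_G(A)|^{-1}$. The situation is delicate in characteristic $p$ because $|W_G(A)|$ can be divisible by $p$, so the classical Molien-series argument is unavailable; it is precisely the structural freeness of Corollary~\ref{corollary W acts freely} that rescues the calculation, and Lemma~\ref{length of a tensor product} should be the right tool for converting it into a graded-length statement compatible with Duflot's descent diagram.
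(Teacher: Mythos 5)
Your outline is the right one: Lynn's theorem is exactly the paper's main Theorem~\ref{theorem main thm geometric degree} specialized to $X=\{pt\}$, and the paper proves that theorem by applying the algebraic additivity formula (Theorem~\ref{theorem sum formula for degree and multiplicity}) twice, once to $H^*_G(X)$ over $H_G(X)$ and once to $H^*_{C_G(A,c)}(c)$ over both $H_G(X)$ and $H_{C_G(A,c)}(c)$, and then establishing the term-by-term identity. However, the step you flag as ``the main obstacle'' is not merely an obstacle--it is the entire technical content of the proof, and the route you sketch for it would not close the gap. The issue is this: Corollary~\ref{corollary W acts freely} gives a relation between global Poincar\'e series, namely $P_{H^*_{C_G(A)}(F^A)}(t) = |W_G(A)|\,P_{H^*_{C_G(A)}(F^A)^{W_G(A)}}(t)$, but what the argument needs is an equality of $*$\emph{lengths after graded localization at} $\p_A$. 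A Poincar\'e-series identity for a module does not descend to a length identity for its localization at a particular minimal prime: localization discards contributions from other primes, and the factor of $|W_G(A)|$ at the level of $\vdim_k$ does not automatically survive. So ``pushing the Poincar\'e-series relation through Quillen's equalizer and Duflot's diagram'' is not a valid move as stated.

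What actually rescues the calculation in the paper is the sharper structural result Theorem~\ref{theorem decomp of cohomology by W}: the free $W_G(A)$-action on the finite set $\pi_0(F^A)$ of components decomposes $P=H^*_{C_G(A)}(F^A)$ into an (internal) direct sum indexed by $\pi_0$, and because $W_G(A)$ acts on this decomposition by permuting the summands freely, one obtains the \emph{length} identity $*\ell_L(P)=|W_G(A)|\,*\ell_L(P^{W_G(A)})$ (part iv of that theorem) for the localized ring $L=R_{[\p_A]}$, not just a Poincar\'e-series statement. That is Theorem~\ref{theorem length of W action}.ii, and Corollary~\ref{corollary W acts freely} is a by-product of the same decomposition (part ii of Theorem~\ref{theorem decomp of cohomology by W}), not a substitute for the length argument. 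You also misplace Lemma~\ref{length of a tensor product}: in the paper it handles the two $H^*(F)$-tensor steps (multiplying $*\ell$ by $m=\dim_k H^*(F)$ on one side and dividing by the same $m$ on the other, so the $m$'s cancel in Theorem~\ref{theorem length G to C}), not the $W$-invariant step. Finally, a smaller point: the paper applies the algebraic additivity formula to $M=H^*_G(X)$ as a module over $R=H_G(X)$, not to $H_G$ over itself; taking $R=M=H_G$ and then appealing to ``a standard comparison of Poincar\'e series'' to recover $H^*_G$ introduces an extra, unjustified reduction that the paper's formulation avoids entirely.
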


As noted in the introduction, she doesn't prove such a formula for equivariant cohomology $H^*_G(X)$ where $X$ is any $G$-space, and in any case, one would only expect her methods to generalize in the case of a smooth $G$-space $X$.

\section{Main Theorem on Degree} 
As usual, we assume that $G$ is compact Lie, and it acts continuously on the Hausdorff space $X$, which is compact or paracompact with finite mod-$p$ cohomological dimension.  Fix a prime $p$, and take $k= \mathbb{F}_p$ to be the field of coefficients for cohomology.  

Before we start, we make the simple observation that the set of  primes for $H^*_G(X)$ as an $H_G(X)$-module is the set of prime ideals in $H_G(X)$, since $Ann_{H_G(X)}H^*_G(X) = \{0\}.$

We will make use of Duflot's localization result \ref{theorem Duflot localization}, so let's look a little more closely at the ring $H^*_{C_G(A,c)}(c)$ for a given pair $(A,c) \in \mathcal{Q}(G,X)$. A first observation is that $H^*_{C_G(A,c)}(c)$ may be thought of as a module in the category $\grmod(H_{C_G(A,c)}(c))$, or as a module in $\grmod(H_G(X))$. Its structure as a finitely generated, graded module over $H_G(X)$ comes from the restriction map $res^G_{C}:H^*_G(X) \rightarrow H^*_{C_G(A,c)}(c)$ - an application of theorem \ref{theorem Quillen finiteness 2}. The following lemma relates the minimal primes of $H^*_{C_G(A,c)}(c)$ as a module over these two rings.

\begin{lemma} \label{lemma minimal primes for HC}
Let $[A,c] \in \mathcal{Q'}(G,X)$. Let $R=H_G(X)$, $S=H_{C_G(A,c)}(c)$, and make $S$-modules into  $R$-modules via the restriction map $res^G_C: R \rightarrow S$, so that $H^*_{C_G(A,c)}(c)$ is an element of $\grmod(R)$ and $\grmod(S)$. Then, 
\begin{itemize}
\item[i.] $\mathcal{Q'}(C_G(A,c),c) = \{ [A,c] \}$
\item[ii.] $\p^C \doteq \ker \left( H_{C_G(A,c)}(c) \rightarrow H_A/ \sqrt{0} \right)$ is the unique minimal prime for $H^*_{C_G(A,c)}(c)$ as an $S$-module.
\item[iii.] $(res^G_C)^{-1} \left( \p^C_{} \right) = \p,$ where $\p \doteq \ker \left( H_G(X) \rightarrow	H_A/\sqrt{0} \right)$.
\item[iv.] $\p$ is the unique minimal prime for $H^*_{C_G(A,c)}(c)$ as an $R$-module.
\item[v.] $*\ell_{S_{[\p^C]}}\left( H^*_{C_G(A,c)}(c)_{[\p^C]} \right) < \infty$ and $*\ell_{R_{[\p]}}\left( H^*_{C_G(A,c)}(c)_{[\p]} \right) < \infty$; also, 
\begin{align*}
\deg\left( H^*_{C_G(A,c)}(c) \right) &= *\ell_{S_{[\p^C]}}\left( H^*_{C_G(A,c)}(c)_{[\p^C]} \right) \deg(S/\p^C)\\
&=*\ell_{R_{[\p]}}\left( H^*_{C_G(A,c)}(c)_{[\p]} \right) \deg(R/\p).
\end{align*}
\end{itemize}
\end{lemma}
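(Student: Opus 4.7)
The plan is to establish (i)--(v) in order, leveraging three tools: Quillen's bijection from Theorem \ref{theorem Quillen main 2}; Lying-Over, available because $H^*_{C_G(A,c)}(c)$ is a finite $H_G(X)$-module by Theorem \ref{theorem Quillen finiteness 2}; and the additivity formula of Theorem \ref{theorem sum formula for degree and multiplicity}. For (i), I would first observe that $(A,c) \in \mathcal{Q}(C_G(A,c),c)$: $A$ is abelian so $A \subseteq C_G(A,c)$, and $A$ fixes $c$ pointwise so $c^A = c$. Given any $(B,d) \in \mathcal{Q}(C_G(A,c),c)$, the subgroups $A$ and $B$ commute, making $AB$ elementary abelian $p$. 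Picking $x \in d \subseteq c$, we get $AB \subseteq G_x$; the maximality of $[A,c]$ in $\mathcal{Q'}(G,X)$ says $A$ is a maximal elementary abelian subgroup of $G_x$, so $AB = A$ and hence $B \subseteq A$. Then $B$ fixes $c$ pointwise, so $c^B = c$ is connected, forcing $d = c$; and the identity of $C_G(A,c)$ gives a morphism $(B,c) \to (A,c)$. This shows $(A,c)$ is the unique maximal class.

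Applying Theorem \ref{theorem Quillen main 2} to the pair $(C_G(A,c),c)$ together with (i) shows that $\p^C$ is the unique minimal prime of the ring $H_{C_G(A,c)}(c)$. Since the inclusion $H_{C_G(A,c)}(c) \hookrightarrow H^*_{C_G(A,c)}(c)$ sends $1 \mapsto 1$, the module annihilator vanishes, so the module and ring have the same minimal primes, giving (ii). For (iii), factor $res^G_A = res^C_A \circ res^G_C$ and take the kernel of the further projection to $H_A/\sqrt{0}$. For (iv), the same vanishing-annihilator observation over $R$ yields $\mathrm{Ann}_R(H^*_{C_G(A,c)}(c)) = \ker(res^G_C)$, so the minimal $R$-primes of the module correspond to minimal primes of the integral extension $R/\ker(res^G_C) \hookrightarrow S$ supplied by Theorem \ref{theorem Quillen finiteness 2}. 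The unique minimal prime $\p^C$ of $S$ is its nilradical; contracting gives the nilradical of $R/\ker$, which is therefore prime, hence the unique minimal prime of $R/\ker$. Its preimage in $R$ is $\p$ by (iii).

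Part (v) applies Theorem \ref{theorem sum formula for degree and multiplicity} twice. Over $S$: the zero annihilator gives $\sdim_S(H^*_{C_G(A,c)}(c)) = \sdim(S) = \sdim(S/\p^C)$ (the last equality because $\p^C$ is the unique minimal prime), so $\mathcal{D} = \{\p^C\}$ and the sum collapses; finiteness of the length is Lemma \ref{finite length lemma}. Over $R$: integrality of the induced extension $R/\p \hookrightarrow S/\p^C$ yields $\sdim(R/\p) = \sdim(S/\p^C) = \sdim_R(H^*_{C_G(A,c)}(c))$, and the parallel argument gives the second identity. The main technical obstacle throughout is the bookkeeping between the $R$- and $S$-module structures in (iv) and (v): keeping straight which ring each invariant is taken over and invoking integrality at the right moments. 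Once that is handled, the fact that the two equalities in (v) produce the same scalar $\deg(H^*_{C_G(A,c)}(c))$ is automatic, since the Poincar\'{e} series is intrinsic to the underlying graded $k$-vector space and the two Krull dimensions coincide by integrality.
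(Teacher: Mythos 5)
Your proof is correct and follows essentially the same path as the paper's. Parts (i)--(iii) and (v) proceed just as in the paper: (i) by forming the elementary abelian product $AB \subseteq C_x$ for $x \in d \subseteq c$ and invoking maximality of $A$ in $G_x$ (the paper phrases this via the subgroup $D = \langle A, B\rangle$ and concludes $A=B$, $d=c$, which is the same calculation); (ii) by Quillen's bijection (Theorem \ref{theorem Quillen main 2}) combined with the observation that the module-annihilator of a unital ring over itself is zero; (iii) by functoriality of restriction; and (v) by observing that each additivity formula of Theorem \ref{theorem sum formula for degree and multiplicity} collapses to its unique summand, with finiteness supplied by Lemma \ref{finite length lemma}. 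The one substantive divergence is in (iv): to show that $\p$ is the \emph{unique} minimal $R$-prime of the module, the paper invokes lying-over to produce, for any competing minimal prime $\q$ of $R/\ker(res^G_C)$, a prime $\tilde{\q}$ of $S$ contracting to it, which necessarily contains $\p^C$ and hence forces $\p \subseteq \q$; you instead note that $\sqrt{0}_{R/\ker(res^G_C)}$ is the contraction of $\sqrt{0}_S = \p^C$ along the injection $R/\ker(res^G_C) \hookrightarrow S$, hence is itself prime, hence is the unique minimal prime of $R/\ker(res^G_C)$, with preimage $\p$ by (iii). Both arguments are valid; yours is slightly more economical in that it needs only injectivity of the induced map on quotient rings rather than the full lying-over property, although you (like the paper) are drawing on the finiteness of the extension from Theorem \ref{theorem Quillen finiteness 2} elsewhere in (v).
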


\begin{proof}  Write $C \doteq C_G(A,c)$.  Of course $A \subseteq C$ since $A$ is abelian.  Since $A$ acts trivially on $c$, we see that, for every $x \in c$, $A \subseteq C_x \subseteq G_x$.  We know that $A$ is a maximal elementary abelian subgroup of $G_x$, so $A$ is a maximal elementary abelian subgroup of $C_x$ as well.  Thus $[A,c] \in \mathcal{Q'}(C,c)$.  On the other hand, suppose that $[B,d] \in \mathcal{Q'}(C,c)$.   Then, $B$ is a maximal elementary abelian subgroup of $C_x$ for every $x \in d$, which is a component of $ c^B.$ Let $D$ be the subgroup of $C$ generated by $A$ and $B$.  Then, $D$ is an elementary abelian subgroup of $C$, by definition of $C$.   Suppose that $x \in d \subseteq c^B$.  Since $A$ fixes $x$ and $B$ fixes $x$, so does $D$, so that $D \subseteq C_x$.  By maximality of $B$ in $C_x$, $B = D$, so that $A \subseteq B$, and by maximality of $A$ in $G_x$, $A = B$, so $c^B = c$ and  $d = c$.  Thus, $(A,c) = (B,d)$ as pairs.  This proves i), and ii) follows from \ref{theorem Quillen main 2}.

To see iii), note that he following diagram is commutative, where $pt$ is any point in $c$: 

\begin{equation*}
\xymatrix{
(G,X)  & (C_G(A,c),c) \ar@{_{(}->}[l] & \\
(A,pt) \ar@{_{(}->}[u] \ar@{^{(}->}[ur]
}
\end{equation*}

By functoriality, there is a commutative diagram: 

\begin{equation*}
\xymatrix{
 H_G(X) \ar[r]^{res^G_{C}} \ar[d]_{\pi \circ res^G_A} & H_{C_G(A,c)}(c) \ar[dl]^{\pi \circ res^{C}_A} \\
 H_A(pt)/\sqrt{0} \\ 
}
\end{equation*}

Now, $\p$ is a minimal prime in $H_G(X)$ by \ref{theorem Quillen main 2}, so by commutativity of the above diagram, $\p = res^G_C\phantom{}^{-1}(\p^C_{})$. 

To see iv),  as noted above, $\p$ is a minimal prime for $H_G(X)$ and thus for $H^*_G(X)$ as an $R$-module. Let's show that $\p$ is a minimal prime for $H^*_{C_G(A,c)}(c)$ as an $R$-module.  We need to show that $\p$ is minimal over $Ann_R(H^*_{C_G(A,c)}(c)) \doteq \left \lbrace r \in R : res^G_C(r) \cdot x =0, \text{ for all } x \in H^*_{C_G(A,c)}(c) \right \rbrace$. But $H^*_{C_G(A,c)}(c)$ is a unital ring, so   $Ann_R(H^*_{C_G(A,c)}(c))= \ker(res^G_C)$, and by commutativity of the diagram $\ker(res^G_C) \subseteq \p$.  Since $\p$ is minimal in $R$, it must be minimal over $Ann_R(H^*_{C_G(A,c)}(c))$ and thus is a minimal prime for $H^*_{C_G(A,c)}(c)$ as an $R$-module. 

Finally, we need to show that $\p$ is the unique minimal prime for $H^*_{C_G(A,c)}(c)$ as an $R$-module.  Since we know that $res^G_C: R \rightarrow S$ is an integral extension, we may use  ``lying over".   Let $\q \subseteq R$ be another minimal prime for $H^*_{C_G(A,c)}(c)$ as an $R$-module; ``lying over" implies there exists a $\tilde{\q} \in Spec(S)$ such that $(res^G_C)^{-1}(\tilde{\q}) = \q$. But, $\p^C$ is the only minimal prime in $Spec(S)$, so $\p^C\subseteq \tilde{\q}.$ Thus, $(res^G_C)^{-1}(\p^C) \subseteq (res^G_C)^{-1}(\tilde{\q})$ which implies that $\p \subseteq \q$, but by assumption $\q$ is minimal, and therefore $\p=\q$. 

For the last proof,  set $N\doteq H^*_{C_G(A,c)}(c)$.  Parts ii) and iv) of this lemma show that $\p$ is a minimal prime for $N$ as an $R$-module and $\p^C$ is a minimal prime for $N$ as an $S$-module. Using  \ref{finite length lemma}, $N_{[\p]}$ is a *Artinian $R_{[\p]}$-module, and $N_{[\p^C]}$ is a *Artinian $S_{[\p^C]}$-module, which proves the claim on finite *length. 

To prove the claim on degree, we use the algebraic additivity formula for degree, Theorem \ref{theorem sum formula for degree and multiplicity}. Since we've shown in parts ii) and iv), that $\p$ is the unique minimal prime for $N$ as an $R$-module, and $\p^C$ is the unique minimal prime for $N$ as an $S$-module, there is only one summand in the algebraic additivity formula for degree, whether we consider $N$ as an $R$-module or an $S$-module.  Therefore, thinking of $N$ as an $R$-module, $\deg(N) = *\ell_{R_{[\p]}}( N_{[\p]}) \deg(R/\p)$; similarly the algebraic additivity formula applied to $N$ as an $S$-module gives $\deg(N) = *\ell_{S_{[\p^C]}}( N_{[\p^C]}) \deg(S/\p^C)$. 
\end{proof}

We state the following theorem in a general algebraic setting, and will demonstrate an application (Theorem \ref{theorem length of W action}) to the cohomology ring $H^*_{C_G(A,c)}(c)^{W_G(A,c)}$.  

\begin{theorem} \label{theorem decomp of cohomology by W}
Let $L$ be a $\mathbb{Z}$-graded $k$-algebra for a field $k$ (concentrated in degree $0$) and let $P$ be a graded $L$-module. Suppose the following:
\begin{itemize}
\item $P$ is a direct sum of graded $L$-submodules indexed by a finite set $\pi_0$:  $P = \oplus_{c \in \pi_0} P_c.$
\item $W$ is a finite group which acts as a group of graded $L$-module automorphisms on $P$. 
\item $W$ acts freely on the set $\pi_0$ and this action is compatible with the $W$-action on $P$ and the direct sum decomposition:   to be precise, for any $x \in P_{c}$, $w \cdot x \in P_{w\cdot c}$.
\end{itemize}

Then,
\begin{itemize}
\item[i.] Let $c_1, \ldots, c_t \in \pi_0$ be a set of representatives for the orbits of $W$ on $\pi_0$. Note that $t= |\pi_0|/|W|$. For each $j$ from $1$ to $t$, define $P[j]= \oplus_{w \in W} P_{wc_j}$. Then, $P[j]$ is a graded submodule of $P$ with respect to both $L$ and $k[W]$. Also, $P = \oplus_{j=1}^t P[j]$.  Here, $k[W]$ is regarded as a graded ring concentrated in degree $0$.
\item[ii.] $P$ is a free $k[W]$-module. 
\item[iii.] $P^W$ is isomorphic as a graded $L$-module to $\oplus_{j=1}^t P_{c_j}$.
\item[iv.] If  $P$ is a *Artinian $L$-module, then so is $P^W $ and, $$*\ell_{L}(P)=|W|*\ell_L(P^W).$$
\end{itemize}
\end{theorem}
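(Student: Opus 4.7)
The plan is to prove the four parts in order, each following from the one before with minor book-keeping about the free action of $W$ on $\pi_0$.

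For part (i), I would simply unpack the hypotheses. Because $W$ acts freely on $\pi_0$, the orbits partition $\pi_0$ into $t = |\pi_0|/|W|$ sets of size exactly $|W|$, so $P = \bigoplus_j P[j]$ is immediate from the assumed direct sum decomposition. Each $P[j]$ is a graded $L$-submodule because it is a sum of graded $L$-submodules, and it is $k[W]$-stable by the compatibility hypothesis: for $x \in P_{wc_j}$ and $w' \in W$, $w' \cdot x \in P_{w'wc_j} \subseteq P[j]$.

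For part (ii), the key observation is that since each $w \in W$ acts as a graded $L$-module automorphism and permutes the summands $P_c$ freely, the map $w \colon P_{c_j} \to P_{wc_j}$ is a $k$-linear (in fact graded $L$-module) isomorphism. Choosing any homogeneous $k$-basis $\{v_\alpha\}$ of $P_{c_j}$, the set $\{w \cdot v_\alpha : w \in W,\ \alpha\}$ is $k$-linearly independent (its elements lie in distinct direct summands) and spans $P[j]$, so $P[j]$ is a free $k[W]$-module on the $v_\alpha$. Summing over $j$ gives the freeness of $P$.

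For part (iii), I would describe the invariants orbit-by-orbit. Since $P^W = \bigoplus_j P[j]^W$, it suffices to identify $P[j]^W$. An element $x = \sum_{u \in W} x_u$ with $x_u \in P_{uc_j}$ is $W$-invariant if and only if $w \cdot x_u = x_{wu}$ for every $w, u \in W$, using that the summands $P_{uc_j}$ are distinct (by freeness of the $W$-action on $\pi_0$) and that $w$ sends $P_{uc_j}$ to $P_{wuc_j}$. Hence $x$ is determined by $x_e \in P_{c_j}$, and the projection $P[j]^W \to P_{c_j}$, $x \mapsto x_e$, is a graded $L$-module isomorphism with inverse $v \mapsto \sum_{u \in W} u \cdot v$.

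For part (iv), the isomorphism in (iii) reduces everything to additivity of $*\ell_L$ over direct sums. If $P$ is $*$Artinian then so is each direct summand $P_c$, hence so is $P^W \cong \bigoplus_{j=1}^t P_{c_j}$. Finally, using that $w \colon P_{c_j} \to P_{wc_j}$ is a graded $L$-module isomorphism,
\[
*\ell_L(P) \;=\; \sum_{j=1}^t \sum_{w \in W} *\ell_L(P_{wc_j}) \;=\; |W|\sum_{j=1}^t *\ell_L(P_{c_j}) \;=\; |W| \cdot *\ell_L(P^W).
\]
The only delicate point in the whole argument is making sure that the free action on $\pi_0$ is used consistently so that the summands $P_{uc_j}$ are genuinely distinct and that $x_u \in P_{uc_j}$ is uniquely determined by $x \in P[j]$; everything else is formal, so I would expect no real obstacle, just careful indexing.
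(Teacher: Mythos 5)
Your proof is correct and follows essentially the same route as the paper: decompose $P$ by $W$-orbits, observe that each $P[j]$ is free over $k[W]$ on a homogeneous basis of $P_{c_j}$, identify $P[j]^W$ with $P_{c_j}$, and conclude by additivity of $*\ell_L$. The only cosmetic difference is in part (iii), where you present the isomorphism as the projection $P[j]^W \to P_{c_j}$, $x \mapsto x_e$, while the paper uses the inverse averaging map $\theta_j\colon P_{c_j} \to P[j]^W$, $v \mapsto \sum_{w} w\cdot v$; these are the same isomorphism read in opposite directions.
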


\begin{proof}  For ease of notation, we assume that the direct sum decomposition is an internal direct sum decomposition, except in item iii).

Property i) is more or less by definition, using the hypotheses:    since the action of $W$ on $\pi_0$ is free,  $P_{wc_j} \cap P_{\tilde{w}c_j} = \{0\}$ if $w \neq \tilde{w}$, so $P[j]$ is a direct sum of graded $L$-submodules, as desired.  $P[j]$ is a $k[W]$-module by hypothesis, and its definition. Using the given decomposition $P=\oplus_c P_c$ along with the fact that we've picked $c_1, \ldots, c_t$ as representatives for the orbits under $W$ (and hence $P[j] \cap P[i] = 0$ for $i \neq j$) gives us the decomposition by orbits: $P=\oplus_{j=1}^t P[j]$. 

Property ii) is straightforward, but we give details:   let $E_d$ be a $k$-vector space basis for the degree $d$ homogeneous component of $P_{c_j}$. We claim that $E_d$ is a $k[W]$-basis for the degree $d$ component of $P[j]$. By picking a $k$-basis for every homogeneous component, and every $j$, we get a $k[W]$-basis  consisting of homogeneous elements for $P= \oplus_{j=1}^tP[j]$ (note that the basis may not be finite).

Writing out the details to see how the hypotheses are used, let $x \in P[j]_d$, and write  $x= \sum_{w \in W} x_w$, $x_w \in P_{w\cdot c_j}$. By hypothesis, $w^{-1}x_w \in P_{w^{-1}wc_j}=P_{c_j}$ for each $w$. Use the vector space basis of $P_{c_j}$ to write $w^{-1}x_w = \sum_{e \in E_d} \alpha_e(w)e$ where $\alpha_e(w) \in k$ for every $e \in E_d$ and equal to 0 for almost all $e$. Thus, $x = \sum_w x_w = \sum_w \sum_e \alpha_e(w) w e$, which is in the $k[W]$-span of the $E_d$.  For linear independence, suppose $\sum_e \xi_e e = 0$ for $\xi_e \in k[W]$; again $\xi_e= 0$ for almost all $e$. For each $e$, write $\xi_e$ as a linear combination $\sum_{w \in W} \alpha_e(w)w$, $\alpha_e(w) \in k$ for every $e$. Then, $\sum_e \xi_e e = \sum_e \sum_w \alpha_e(w) w e = \sum_w (\sum_e \alpha_e(w) we)$.  Now for each $w \in W$, $w\cdot E \doteq \{w\cdot e \mid e \in E_d \}$ is contained in $P_{wc_j}$, and $w \cdot E$ is a vector space basis for the degree $d$ homogeneous component of $P_{w c_j}$ because $w$ is a graded automorphism of $P$.  Therefore $\sum_e \alpha_e(w) w \cdot e = 0$, for every $w$,  using the direct sum decompostion; in turn, $ \alpha_e(w) = 0$, for every $w$ and every $e$, so $\xi_e= 0$ for every $e$. 

For property iii), define $\theta_j: P_{c_j} \rightarrow P[j]$ by $x \mapsto \sum_{w \in W} w\cdot x$.  Since each $w$ acts as a graded $L$-module homomorphism,  $\theta_j$ is a graded $L$-module homomorphism.  Further, $\theta_j$ is injective since $P[j]$ is direct sum. 

We claim that the image of $\theta_j$ is $P[j]^W$; in this case, the result is obtained since invariants distribute over direct sums of $k[W]$-modules. i.e. $\oplus_j \theta_j$ is an isomorphism from  $\oplus_{j=1}^t P_{c_j}$ to $P^W$. It's straightforward to see that the image of $\theta_j$ is contained in $P[j]^W$.
On the other hand, let $x \in P[j]^W$, and write $x$ uniquely as $\sum_{w \in W} x_w$, where $x_w \in P_{wc_j}$ for every $w$. Now, for every $g \in W$, $gx = \sum_{w \in W} gx_w = \sum_{w \in W} x_w = x$. Since $gx_w \in P_{gwc_j}$, using the direct sum property, if $1$ is the identity in the group $W$,   $w x_{1} = x_w$ for every  $w$ and  $\theta_j(x_{1})=\sum_{w \in W} wx_{1} = x$.

For iv), as expected, *length is additive over short exact sequences of $L$-modules and thus over direct sums.  Of course, for each $w$ and each $c_j$, $*\ell_L(P_{cj})=*\ell_L(P_{wc_j})$ since $W$ acts as a group of $L$-automorphisms. Thus, $*\ell_L(P)=\sum_{j=1}^t \sum_{w \in W} *\ell_L(P_{wc_j})=|W|\sum_{j=1}^t*\ell_L(P_{c_j})=|W|*\ell_L(P^W)$.

\end{proof}

In the following theorem, we've fixed an embedding of $G$ in a unitary group $U$, and $F \doteq U/S$ is the $G$-space described previously;  if $(A,c) \in \mathcal{Q}(G,X)$,  we've also already explained how we are considering $H^*_{C_G(A,c)}(c\times F^A)$ as a graded $H_G(X)$-module.

\begin{theorem} \label{theorem length of W action} 
Let $[A,c] \in \mathcal{Q'}(G,X)$ and $\p \doteq \p_{(A,c)}$. 
\begin{itemize}
\item[i.] Let $\pi_0$ be the set of connected components of $c \times F^A$. Then,
\begin{itemize}
\item[(a)] $H^*_{C_G(A,c)}(c\times F^A)$ is a free $k[W_G(A,c)]$-module.
\item[(b)] If $c_i$, $1 \leq i \leq t$, are the representatives for the orbits of $W_G(A,c)$ acting on the set of components of $c \times F^A$, then  $$H^*_{C_G(A,c)}(c\times F^A)^{W_G(A,c)} \cong \oplus_{i=1}^t H^*_{C_G(A,c)}(c_i),$$  as graded $H_G(X)$-modules.
\end{itemize}
\item[ii.]  $*\ell_{H_G(X)_{[\p]}} \left( H^*_{C_G(A,c)}(c \times F^A)^{W_G(A,c)}_{[\p]} \right) = \frac{1}{|W_G(A,c)|} *\ell_{H_G(X)_{[\p]}}\left( H^*_{C_G(A,c)}(c \times F^A)_{[\p]} \right)$
\end{itemize}
\end{theorem}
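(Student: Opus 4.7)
The plan is to apply Theorem \ref{theorem decomp of cohomology by W} with $L = H_G(X)$, $W = W_G(A,c)$, indexing set $\pi_0$ the set of connected components of $c \times F^A$, and
\[ P = H^*_{C_G(A,c)}(c \times F^A) = \bigoplus_{d \in \pi_0} H^*_{C_G(A,c)}(d). \]
Lemma \ref{lemma W acts freely}(a) guarantees that $C_G(A,c)$ fixes each component setwise, so each summand $H^*_{C_G(A,c)}(d)$ is a well-defined graded $L$-submodule and the $N_G(A,c)$-action on $\pi_0$ descends to a $W_G(A,c)$-action; Lemma \ref{lemma W acts freely}(b) provides freeness of this action. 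That $W$ acts by graded $H_G(X)$-module automorphisms is exactly the content of the inner-automorphism diagram spelled out in the discussion preceding Theorem \ref{theorem Duflot diagram}. With these hypotheses verified, parts (i)(a) and (i)(b) of the statement follow directly from conclusions (ii) and (iii) of Theorem \ref{theorem decomp of cohomology by W}, respectively.

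For part (ii), I would apply Theorem \ref{theorem decomp of cohomology by W} a second time, now with $L' = H_G(X)_{[\p]}$ and $P' = P_{[\p]}$. Since graded localization is exact and commutes with finite direct sums, $P'$ inherits both the decomposition and the $W$-action, now by graded $L'$-module automorphisms. Invoking conclusion (iv) then yields $*\ell_{L'}(P') = |W|\cdot *\ell_{L'}\!\left((P')^W\right)$, provided $P'$ is \emph{*Artinian} over $L'$. To verify this, Theorem \ref{theorem Quillen finiteness 2} shows $P$ is finitely generated over $H_G(X)$, so its annihilator is the kernel of the unital restriction $H_G(X) \to H_{C_G(A,c)}(c \times F^A)$. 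Because $A$ fixes every point of $c \times F^A$, this restriction factors through $H_A(\mathrm{pt})/\sqrt{0}$ exactly as in Lemma \ref{lemma minimal primes for HC}, placing the annihilator inside $\p$. Minimality of $\p$ in $H_G(X)$ (Theorem \ref{theorem Quillen main 2}) then makes $\p$ a minimal prime for $P$, and Lemma \ref{finite length lemma} yields $*\ell_{L'}(P') < \infty$.

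The step I expect to demand the most care is the commutation of invariants with graded localization, i.e.\ the identification $(P_{[\p]})^W \cong (P^W)_{[\p]}$ needed to convert the formula from (iv) into the stated equality. The cleanest route bypasses any averaging argument (which would require $|W|$ to be invertible in $k$) and instead applies conclusion (iii) of Theorem \ref{theorem decomp of cohomology by W} on both sides: before localization it identifies $P^W$ with $\bigoplus_{i=1}^t H^*_{C_G(A,c)}(c_i)$, after localization it identifies $(P_{[\p]})^W$ with $\bigoplus_{i=1}^t H^*_{C_G(A,c)}(c_i)_{[\p]}$, and these two graded $L'$-modules agree termwise by exactness of graded localization. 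Substituting this identification into the length formula from (iv) gives exactly the equality of part (ii).
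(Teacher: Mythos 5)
Your proof is correct and follows essentially the same route as the paper: verify the hypotheses of Theorem~\ref{theorem decomp of cohomology by W} for $P = H^*_{C_G(A,c)}(c\times F^A)$ over $L = H_G(X)$ using Lemma~\ref{lemma W acts freely} and the inner-automorphism discussion, apply parts (ii)--(iii) for (i), then re-apply part (iv) after graded localization at $\p$ for (ii), with *Artinianness supplied by Lemma~\ref{finite length lemma}. You are in fact slightly more careful than the paper on the identification $(P_{[\p]})^{W} \cong (P^{W})_{[\p]}$, which the paper passes over silently; your route via (iii) works, as does simply noting that invariants are a kernel and graded localization is exact.
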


\begin{proof} Set $R \doteq H_G(X)$, $W \doteq W_G(A,c)$.
Lemma \ref{lemma W acts freely} tells us that $W$ acts freely on the set $\pi_0$ of connected components of $c \times F^A$, and that this set of components is a finite set.   Quillen shows that $W$ is a finite group (\cite{Quillen2}).

Using lemma \ref{lemma W acts freely}, each $d \in \pi_0$ is a $C_G(A,c)$-space, so that  $H^*_{C_G(A,c)}(c \times F^A) = \oplus_{d \in \pi_0} H^*_{C_G(A,c)}(d)$; this is an  isomorphism of graded $R$-modules.  Now, the action of $W$ on $H^q_{C_G(A,c)}(c \times F^A)$, for any $q$,  comes from the natural geometric action of $N_G(A,c)$ on $c \times F^A$ (see the discussion on page 4) and thus takes components to components, so for every $x \in H^q_{C_G(A,c)}(d)$, and every $n \in N_G(A,c)$, $n^*(x) \in H^q_{C_G(A,c)}(n\cdot d).$  Thus, the same is true for the $W$-action on cohomology;  the discussion on pages 4 and 5 tells us that this $W$-action on cohomology gives compatibility with the $R$-module structure on $H^q_{C_G(A,c)}(c \times F^A)$.  Putting this together, we see that the hypotheses required for conclusions i), ii), and iii) of  Theorem \ref{theorem decomp of cohomology by W}  are true, for $L = R$, $W = W_G(A,c)$, $P =  H^*_{C_G(A,c)}(c \times F^A)$ and $\pi_0$ as in this theorem.  Therefore, i.a) and i.b) are true.

Finally, using \ref{theorem Quillen main 2}, $\p$ is a minimal prime in $R$, so, using \ref{finite length lemma},  the graded localization of $H^*_{C_G(A,c)}(c \times F^A)$ at the minimal prime $\p$ is a *Artinian $R_{[\p]}$-module.  Now, note that $W$ acts on $ H^*_{C_G(A,c)}(c \times F^A)_{[\p]}$ via the formula $w \cdot (a/b) = (w \cdot a)/b$.   This is well-defined since $W$ acts as a group of $R$-module isomorphisms (see the discussion on pages 4 and 5) and the denominators all come from $R$. Since we've already verified the hypotheses of Theorem \ref{theorem decomp of cohomology by W} for $L=R$, $ P = H^*_{C_G(A,c)}(c \times F^A)$, and these verifications are well-behaved with respect to localization, the hypotheses of Theorem \ref{theorem decomp of cohomology by W} are also true for $L = R_{[\p]}$ and $P = H^*_{C_G(A,c)}(c \times F^A)_{[\p]}$ ; applying part iv) of \ref{theorem decomp of cohomology by W} to the localized set-up, we are done. \end{proof}

\begin{theorem} \label{theorem length G to C}
Let $R=H_G(X)$.  Suppose that $[A,c] \in \mathcal{Q'}(G,X) $ and $\p \doteq \p_{(A,c)}$.  Then, $$*\ell_{R_{[\p]}}(H^*_G(X)_{[\p]}) = \frac{1}{|W_G(A,c)|} *\ell_{R_{[\p]}}(H^*_{C_G(A,c)}(c)_{[\p]}).$$
\end{theorem}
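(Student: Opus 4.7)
The plan is to introduce Quillen's magic space $F = U/S$ as an intermediary, exploiting the fact that while $W \doteq W_G(A,c)$ need not act freely on the (single) component $c$, by Lemma \ref{lemma W acts freely} it does act freely on the components of $c \times F^A$. Set $C \doteq C_G(A,c)$, $R \doteq H_G(X)$, and $d \doteq \dim_k H^*(F)$; because $F$ is a compact manifold $d<\infty$, and all *lengths appearing below are finite by Lemma \ref{finite length lemma}.

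First, we apply Lemma \ref{lemma Q formula} to the pairs $(G,X)$ and $(C,c)$ to obtain two Künneth-type isomorphisms
$$H^*_G(X \times F) \cong H^*_G(X) \otimes_k H^*(F), \qquad H^*_C(c \times F) \cong H^*_C(c) \otimes_k H^*(F).$$
Localizing at $\p$ and invoking Lemma \ref{length of a tensor product} will then produce
$$*\ell_{R_{[\p]}}(H^*_G(X \times F)_{[\p]}) = d\cdot *\ell_{R_{[\p]}}(H^*_G(X)_{[\p]}),$$
and similarly with $(G,X)$ replaced by $(C,c)$ on both sides. Next, composing the vertical isomorphisms $\textcircled{1}, \textcircled{2}, \textcircled{3}$ from Duflot's diagram (Theorem \ref{theorem Duflot diagram}) produces a graded $R_{[\p]}$-module isomorphism
$$H^*_G(X \times F)_{[\p]} \cong H^*_C(c \times F^A)^{W}_{[\p]};$$
Theorem \ref{theorem length of W action}(ii) then converts the resulting length equality into
$$*\ell_{R_{[\p]}}(H^*_G(X \times F)_{[\p]}) = \tfrac{1}{|W|}\, *\ell_{R_{[\p]}}(H^*_C(c \times F^A)_{[\p]}).$$

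The final ingredient needed is the identity $*\ell_{R_{[\p]}}(H^*_C(c \times F)_{[\p]}) = *\ell_{R_{[\p]}}(H^*_C(c \times F^A)_{[\p]})$, arising from the restriction along $c \times F^A \hookrightarrow c \times F$ being a graded $R_{[\p]}$-module isomorphism after localization. With this in hand, we chain together
\begin{align*}
d\cdot *\ell_{R_{[\p]}}(H^*_G(X)_{[\p]}) &= *\ell_{R_{[\p]}}(H^*_G(X \times F)_{[\p]}) = \tfrac{1}{|W|}\, *\ell_{R_{[\p]}}(H^*_C(c \times F^A)_{[\p]}) \\
&= \tfrac{1}{|W|}\, *\ell_{R_{[\p]}}(H^*_C(c \times F)_{[\p]}) = \tfrac{d}{|W|}\, *\ell_{R_{[\p]}}(H^*_C(c)_{[\p]}),
\end{align*}
and cancellation of the nonzero factor $d$ yields the claim.

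The main obstacle will be establishing that last ingredient, the non-invariant analogue of arrow $\textcircled{4}$ in Duflot's diagram. The invariant form is arrow $\textcircled{4}$ itself; the non-invariant form should follow from the very same application of Lemma 3.3 of \cite{Duflot} that underlies it---namely a Quillen-type localization for the $A$-action on $c\times F$, using that $(c \times F)^A = c \times F^A$ since $A$ acts trivially on $c$. Taking $W$-invariants of this stronger statement then recovers arrow $\textcircled{4}$ as a consequence.
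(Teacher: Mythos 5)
Your proof is correct and takes essentially the same route as the paper's: the same chain of five identities in the same order, built from the K\"unneth isomorphism of Lemma \ref{lemma Q formula} applied to both $(G,X)$ and $(C,c)$ (together with Lemma \ref{length of a tensor product}), the composition of arrows $\textcircled{1},\textcircled{2},\textcircled{3}$ of Duflot's diagram, Theorem \ref{theorem length of W action}(ii), and the non-invariant form of arrow $\textcircled{4}$. Your closing observation about the ``obstacle'' is also exactly right: the paper's step (4) invokes the diagram for an isomorphism $H^*_{C}(c \times F)_{[\p]} \cong H^*_{C}(c \times F^A)_{[\p]}$ without the $W$-superscript, which is not literally one of the displayed vertical arrows but rather the underlying localization isomorphism (Lemma 3.3 of Duflot's paper) from which arrow $\textcircled{4}$ is obtained by passing to invariants; the paper glosses over this distinction, and your remark makes it explicit.
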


\begin{proof}
Using (\ref{theorem Quillen main 2}) we know that $\p$ is a minimal prime ideal in $R$.

In this computation abbreviate $C_G(A,c)$ to $C$, $W_G(A,c)$ to $W$. Note the following facts.

1) We use lemma \ref{lemma Q formula}, computing $H^*_G(X \times F) \cong H^*_G(X) \otimes_{k} H^*(F)$, where the $R$-module structure is given by $r(a \otimes v) \doteq ra \otimes v$. Therefore, when we localize, $$H^*_G(X \times F)_{[\p]} \cong H^*_G(X)_{[\p]} \otimes_{k} H^*(F)$$ as $R_{[\p]}$-modules.

Now, $F$ is a finite dimensional compact manifold, so $H^*(F)$ is a finite dimensional graded vector space over $k$, let's say it has dimension $m$; thus, using lemma \ref{length of a tensor product}, $$*\ell_{R_{[\p]}}(H^*_G(X \times F)_{[\p]})= *\ell_{R_{[\p]}}(H^*_G(X)_{[\p]})\cdot m.$$

2) Refer back to the diagram of theorem \ref{theorem Duflot diagram}; that theorem asserts that $H^*_G(X \times F)_{[\p]} \cong H^*_{C}(c \times F^A)^W_{[\p]}$ as $R_{[\p]}$-modules.  

3) Using theorem \ref{theorem length of W action},  $$*\ell_{R_{[\p]}} \left( H^*_{C}(c \times F^A)^{W}_{[\p]} \right) = \frac{1}{|W|} *\ell_{R_{[\p]}}\left( H^*_{C}(c \times F^A)_{[\p]} \right).$$

4) By referring again to the diagram of theorem \ref{theorem Duflot diagram}, the fourth vertical arrow gives an isomomorphism: $$H^*_{C}(c \times F^A)_{[\p]} \cong H^*_{C}(c \times F)_{[\p]}$$ as $R_{[\p]}$-modules.

5) Again using lemma \ref{lemma Q formula} and the properties of localization for the case of the pair $(C,c)$, $H^*_C(c \times F)_{[\p]} \cong H^*_{C}(c)_{[\p]} \otimes_{k} H^*(F)$ as graded $R_{[\p]}$-modules.

Putting these five results together gives us the following computation: 
\begin{align*}
*\ell_{R_{[\p]}}(H^*_G(X)_{[\p]})&\stackrel{(1)}{=} \frac{1}{m}*\ell_{R_{[\p]}}(H^*_G(X \times F)_{[\p]})\\
&\stackrel{(2)}{=} \frac{1}{m}*\ell_{R_{[\p]}}(H^*_{C}(c \times F^A)^W_{[\p]}))\\
&\stackrel{(3)}{=} \frac{1}{m}\frac{1}{|W|} *\ell_{R_{[\p]}}(H^*_{C}(c \times F^A)_{[\p]}) \\
&\stackrel{(4)}{=}  \frac{1}{m}\frac{1}{|W|} *\ell_{R_{[\p]}}(H^*_{C}(c \times F)_{[\p]}) \\
&\stackrel{(5)}{=}  \frac{1}{m}\frac{1}{|W|}\cdot m \cdot *\ell_{R_{[\p]}}(H^*_{C}(c)_{[\p]})\\
&{=} \frac{1}{|W|} *\ell_{R_{[\p]}}(H^*_{C}(c)_{[\p]})\\
\end{align*}
\end{proof}

We are now in position to prove our main result of the paper.

\begin{theorem} \label{theorem main thm geometric degree}
Suppose $X$ is a $G$-space.  Then,
$$\deg(H^*_G(X)) = \sum_{[A,c] \in \mathcal{Q'}_{max}(G,X)}\frac{1}{|W_G(A,c)|} \deg(H^*_{C_G(A,c)}(c)).$$
Furthermore, recall the algebraic additivity formula for degree (theorem \ref{theorem sum formula for degree and multiplicity}) for a graded module $M \in \grmod(R)$: $$\deg(M) = \sum_{\p \in \mathcal{D}(M)}*\ell_{R_{[\p]}}(M_{[\p]})\cdot \deg(R/\p).$$  Then, for $M =H^*_G(X)$, and $R=H_G(X)$, the index sets of the two additivity formulae are in 1-1 correspondence and the summands are equal term-by-term under this correspondence.  Thus, the result can be understood as a ``geometric" interpretation of the algebraic additivity formula. 
\end{theorem}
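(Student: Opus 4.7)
The plan is to derive the geometric additivity formula directly from the algebraic additivity formula (Theorem~\ref{theorem sum formula for degree and multiplicity}) applied to $M = H^*_G(X)$ over $R = H_G(X)$, using Quillen's Theorem~\ref{theorem Quillen main 2}, the length-transfer Theorem~\ref{theorem length G to C}, and Lemma~\ref{lemma minimal primes for HC} as the dictionary between the algebraic and geometric sides. Once the index sets of the two formulas are matched, the theorem reduces to a term-by-term substitution.

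\textbf{Step 1: Matching the index sets.} First I would identify $\mathcal{D}(H^*_G(X))$ with $\{\p_{(A,c)} : [A,c] \in \mathcal{Q'}_{max}(G,X)\}$. Since $R$ sits inside $H^*_G(X)$ as a subring containing $1$, we have $\mathrm{Ann}_R(H^*_G(X)) = 0$, so the minimal primes for $H^*_G(X)$ as an $R$-module are exactly the minimal primes of $R$; by Theorem~\ref{theorem Quillen main 2} these are precisely the $\p_{(A,c)}$ with $[A,c] \in \mathcal{Q'}(G,X)$. Similarly $\sdim_R(H^*_G(X)) = \sdim_R(R)$, and from Quillen's work (specifically Theorem~\ref{theorem Quillen main 1} together with his correspondence) one has $\sdim_R(R/\p_{(A,c)}) = \mathrm{rk}(A)$, so selecting those minimal primes of maximal dimension is exactly restricting to $[A,c] \in \mathcal{Q'}_{max}(G,X)$.

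\textbf{Step 2: Rewriting each summand.} Writing $\p = \p_{(A,c)}$, the algebraic additivity formula (applied to $M = H^*_G(X)$ over $R$) reads
\[
\deg(H^*_G(X)) = \sum_{[A,c] \in \mathcal{Q'}_{max}(G,X)} *\ell_{R_{[\p]}}\bigl(H^*_G(X)_{[\p]}\bigr)\cdot\deg(R/\p).
\]
Theorem~\ref{theorem length G to C} then rewrites each length factor as $\tfrac{1}{|W_G(A,c)|}\,*\ell_{R_{[\p]}}(H^*_{C_G(A,c)}(c)_{[\p]})$, and Lemma~\ref{lemma minimal primes for HC}(v), which itself is just the algebraic additivity formula applied to $H^*_{C_G(A,c)}(c)$ (which has the unique minimal prime $\p$ over $R$), supplies the identity $*\ell_{R_{[\p]}}(H^*_{C_G(A,c)}(c)_{[\p]})\cdot\deg(R/\p) = \deg(H^*_{C_G(A,c)}(c))$. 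Combining these two rewrites, the $[A,c]$-summand becomes $\frac{1}{|W_G(A,c)|}\deg(H^*_{C_G(A,c)}(c))$, which is the desired geometric summand; by construction the two formulas agree term-by-term under the correspondence $[A,c] \leftrightarrow \p_{(A,c)}$.

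\textbf{Main obstacle.} The technical heart is not in the manipulations of Step~2 but in Step~1: one must carefully juggle three closely related notions --- minimal primes of $R$, minimal primes for $H^*_G(X)$ as an $R$-module, and the subset $\mathcal{D}(H^*_G(X))$ of those achieving maximal dimension --- and verify that Quillen's bijection restricts to a bijection with $\mathcal{Q'}_{max}(G,X)$. Once this identification and the observation $\mathrm{Ann}_R(H^*_G(X)) = 0$ are in place, everything else is a formal substitution that makes the algebra-geometry duality manifest and, as a bonus, produces the claimed term-by-term interpretation automatically.
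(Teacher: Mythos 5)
Your proposal is correct and follows essentially the same route as the paper's own proof: apply the algebraic additivity formula to $H^*_G(X)$ over $R = H_G(X)$, use Quillen's Theorem~\ref{theorem Quillen main 2} (plus the observation $\mathrm{Ann}_R(H^*_G(X)) = 0$, which the paper records just before stating the theorem) to identify $\mathcal{D}(H^*_G(X))$ with $\mathcal{Q'}_{max}(G,X)$, then substitute via Theorem~\ref{theorem length G to C} and Lemma~\ref{lemma minimal primes for HC}(v). Your Step~1 is slightly more explicit than the paper's one-sentence appeal to Quillen's bijection, but the overall argument and the chain of lemmas invoked are the same.
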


\begin{proof}
We use the following notation: $M=H^*_G(X)$, $R=H_G(X)$, $N=H^*_{C_G(A,c)}(c)$, and $S=H_{C_G(A,c)}(c)$. 

Let's apply the algebraic additivity formula to $M$ as an $R$-module (Theorem \ref{theorem sum formula for degree and multiplicity} )  $$\deg(M) {=} \sum_{\p \in \mathcal{D}(M)}*\ell_{R_{[\p]}}(M_{[\p]})\cdot \deg(R/\p).$$ Recall that $\mathcal{D}(M) \doteq \{ \q \in Spec(R): *\dim(R/\q) = *\dim_R(M) \}$. By Quillen's  theorem \ref{theorem Quillen main 2}, there is a bijective correspondence between $\mathcal{D}(M)$ and $\mathcal{Q}'_{max}(G,X)$. 

Now, the *length factor in each summand of the degree formula may be re-written using the previous theorem (\ref{theorem length G to C}): $$*\ell_{R_{[\p]}}(M_{[\p]}) = \frac{1}{|W|}*\ell_{R_{[\p]}}(N_{[\p]}).$$

Let $[A,c]$ be any element of $\mathcal{Q}'_{max}(G,X)$, and consider the corresponding primes \newline $\p \doteq \ker\left( res^G_A: H^*_G(X) \rightarrow H_A/ \sqrt{0} \right)$, and $\p^C \doteq \ker\left( res^G_A: H^*_{C_G(A,c)}(c) \rightarrow H_A/ \sqrt{0} \right)$.  In Lemma \ref{lemma minimal primes for HC}, we compared the sum formula for the degree of $N$ given its $R$ and $S$-module structures. In particular, we showed that, in either case, the additivity formula for  the degree of $N$ has only one summand,  which may be computed using either module structure:
\begin{align*}
\deg( N ) &= *\ell_{S_{[\p^C]}}\left( N_{[\p^C]} \right) \deg(S/\p^C)\\
&=*\ell_{R_{[\p]}}\left( N_{[\p]} \right) \deg(R/\p).
\end{align*}

We therefore have the computation:
\begin{align*}
\deg(M)&=\sum_{\p \in \mathcal{D}(M)}*\ell_{R_{[\p]}}(M_{[\p]})\cdot \deg(R/ \p)\\
&= \sum_{[A,c] \in \mathcal{Q}'_{max}(G,X)} \frac{1}{|W_G(A,c)|} *\ell_{R_{[\p]}}(N_{[\p]})\deg(R/ \p)\\
&= \sum_{[A,c] \in \mathcal{Q}'_{max}(G,X)} \frac{1}{|W_G(A,c)|}*\ell_{S_{[\p^C]}}(N_{[\p^C]})\deg(S/\p^C)\\
&= \sum_{[A,c] \in \mathcal{Q}'_{max}(G,X)}\frac{1}{|W_G(A,c)|}\deg(N).
\end{align*}

\end{proof}

\end{document}